\numberwithin{equation}{section}
\def\pa{\partial}
\def\sdg{Schr\"odinger }
\newcommand{\R}{\mathbb{R}}
\newcommand{\norm}[1]{\left\| #1\right\|}
\newcommand{\eps}{\varepsilon}
\newcommand{\al}{\alpha}
\newcommand{\les}{\lesssim}
\newcommand{\ges}{\gtrsim}
\newcommand{\vp}{\varphi}
\newcommand{\DO}{\Delta_\Omega}
\newcommand{\tq}{\tilde{q}}
\newcommand{\tr}{\tilde{r}}
\renewcommand{\Re}{\text{ Re }}
\renewcommand{\Im}{\text{ Im }}
\newcommand{\ra}{\rightarrow}
\newcommand{\ep}{\epsilon}
\newcommand{\tsf}{C_c^\infty(\Omega)}
\newtheorem{theorem}{Theorem}[section]
\newtheorem{lemma}[theorem]{Lemma}
\newtheorem{corollary}[theorem]{Corollary}
\newtheorem{proposition}[theorem]{Proposition}
\theoremstyle{definition}
\newtheorem{definition}{Definition}[section]
\newtheorem{remark}[theorem]{Remark}
\newcommand{\beq}{\begin{equation}}
\newcommand{\eeq}{\end{equation}}
\newcommand{\beqq}{\begin{equation*}}
\newcommand{\eeqq}{\end{equation*}}
\theoremstyle{remark}
\newcommand{\Extend}[5]{\ext@arrow0099{\arrowfill@#1#2#3}{#4}{#5}}
\begin{document}
\title[Scattering for focusing NLS on exterior domain]{
Scattering
for 3d cubic focusing NLS on the domain
outside a convex obstacle revisited}

\author{Chengbin Xu}%
\address{School of Mathematics and Statistics,
 Zhengzhou University,
 100 Kexue Road, Zhengzhou, Henan, 450001, China
}%
\email{xcbsph@163.com}%

\author{Tengfei Zhao}%
\address{Beijing Computational Science Research Center,
No. 10 West Dongbeiwang Road, Haidian District, Beijing, China, 100193 }
\email{zhao\underline{ }tengfei@csrc.ac.cn}%

\author{Jiqiang Zheng}
\address{Institute of Applied Physics and Computational Mathematics,
P. O. Box 8009,\ Beijing,\ China,\ 100088}
\email{zhengjiqiang@gmail.com}

\begin{abstract}
In this article, we consider the
focusing cubic nonlinear
\sdg equation(NLS)
in the exterior domain outside of a convex obstacle  in $\R^3$ with Dirichlet
 boundary conditions. We revisit the scattering result below ground state in  Killip-Visan-Zhang\cite{KVZ-2016-AMR}
by utilizing the method of Dodson and Murphy \cite{DM-2017-PAMS,DM-2017-np-nonradial} and the dispersive estimate in Ivanovici and Lebeau \cite{IL-2017-CRM},
which avoids using the concentration compactness.
We conquer the difficulty of the boundary in the focusing case by establishing
a local smoothing effect of the boundary.
Based on this effect and
the interaction Morawetz estimates, we prove the solution decays
at a large time interval, which meets the scattering criterions.

\end{abstract}

 \maketitle

\begin{center}
 \begin{minipage}{100mm}
   { \small {{\bf Key Words:}  Schr\"odinger equation; exterior domain; global well-posedness;  scattering criterions.}
      {}
   }\\
    { \small {\bf AMS Classification:}
      {35P25,  35Q55, 47J35.}
      }
 \end{minipage}
 \end{center}



\section{Introduction}

Consider the Cauchy problem of the nonlinear Schr\"odinger
 equation with Dirichlet boundary condition
\beq\label{nls-ex-do}
\left\{
\begin{aligned}
i\pa_t u+\Delta u&=\,-|u|^{2}u=:F(u), \quad (t,x)\in \R\times \Omega \\
  u(0,x)&=\,\phi(x),\\
  u(t,x)&=\,0, \quad \quad \quad x\in \pa \Omega,
 \end{aligned}
\right.
\eeq
where  $\Omega$ is the exterior of a
smooth, compact, strictly convex obstacle
 $\Omega^c \subset \R^3$ with smooth boundary $\pa \Omega$, and
$\Delta$ is the Dirichlet Laplacian operator. 
It is easy to find that the solution $u$ to
equation \eqref{nls-ex-do}
with sufficient smooth conditions posses
the energy conservation
\beq\label{E-Om}
E_{\Omega}(u(t))~:=~\int_\Omega \left[ \frac12|\nabla u(t,x)|^2 -\frac1{4} |u(t,x)|^{4} \right]dx=E_{\Omega}(u_0)
\eeq
and mass conservation
\beq\label{M-Om}
M_{\Omega}(u(t)) ~:= ~\int_\Omega |u(t,x)|^2 dx=M_{\Omega}(u_0).
\eeq

When $\Omega=\R^3$, the Cauchy
problem
\begin{equation}\label{equ:nls}
  \begin{cases}
  i\pa_tu+\Delta u+|u|^2u=0,\quad (t,x)\in\R\times\R^3,\\
  u(0,x)=u_0(x),
  \end{cases}
\end{equation}
is scale invariant. More precisely,
the class of solutions to \eqref{equ:nls} is left invariant by the
scaling
\begin{equation}\label{scale}
u(t,x)\mapsto \lambda u(\lambda^2t, \lambda
x),\quad\lambda>0.
\end{equation}
Moreover, one can also check that the only homogeneous $L_x^2$-based
Sobolev space that is left invariant under \eqref{scale} is
$\dot{H}_x^\frac12(\R^3).$
Hence, we say that the Cauchy problem \eqref{nls-ex-do}
 is $\dot H^{\frac12}$-critical.
We will consider the well-posedness and long time behavior of the
Cauchy problem \eqref{nls-ex-do} with initial data in the energy spaces.
To do it, we first recall the classical Sobolev spaces on the domain $\Omega$.
\begin{definition}
 For integer $k\geq1$ and $1\leq p\leq \infty$, we denote  $H^{k,p}_0(\Omega)$
as the closure of  $C_c^\infty(\Omega)$  under the norm
$$\norm{u}_{H^{k,p}_0(\Omega)}~:=~ \sum_{|\al|\leq k}  \norm{\pa^\al u}_{L^p(\Omega)}.  $$
If $p=2$, we also write $H^{k}_0(\Omega)=H^{k,2}_0(\Omega)$ for simplicity.

\end{definition}

In fact, $-\Delta$ is an unbounded and positive semi-define symmetric operator on $\tsf$.
We define the corresponding quadratic form by for $u,v\in \tsf$
\beqq
Q(u,v)~=~\int_\Omega  \nabla u(x) \nabla \bar{v}(x) dx.
\eeqq
The extension of form $Q$ is unique and defined on $H^1_0(\Omega)$.
Then the Friedrichs extension
of $-\Delta$ gives
the Dirichlet Laplacian on $\Omega$, $-\DO$, which is a
 self-adjoint operator and with form domain $Q(-\DO)=D(\sqrt{-\DO})$.
By the spectral theorem, we are able to denote the spectral measure $E(\lambda)$  and the operators 
by
\beqq
\vp(\sqrt{-\DO})= \int_{[0,\infty)} \vp(\lambda) d E(\lambda).
\eeqq
Thus, the linear operator $e^{it\DO}$ associated
to the free \sdg  equation on $\Omega$ is well
defined and unitary on $L^2(\Omega)$. And we can define the
Sobolev spaces based on the operator $\DO$.

\begin{definition}
For $s\geq 0$ and $1<p<\infty$, let $\dot H^{s,p}_D(\Omega)$ and    $H^{s,p}_D(\Omega)$
denote the completions of $C_c^\infty(\Omega)$ under the norms

$$    \norm{f}_{\dot H^{s,p}_D(\Omega)}:= \norm{(-\DO)^\frac{s}2 f}_{L^p(\Omega)} \quad { \text{and} } \quad
\norm{f}_{ H^{s,p}_D(\Omega)}:= \norm{(1-\DO)^\frac{s}2 f}_{L^p(\Omega)}.
$$
When $p=2$ we also write  $\dot H^{s}_D(\Omega)$  and $ H^{s}_D(\Omega)$ for $\dot H^{s,2}_D(\Omega)$
 and $ H^{s,2}_D(\Omega)$, respectively.
\end{definition}
These two definitions are equivalent under certain conditions, see Proposition \ref{eq-Sobolev} below.

%
%


For the  Euclidean space $\R^d$,
the linear operator $ e^{it\Delta}$ obeys the dispersive estimates and the Strichartz estimates.
Owing to this, the local well-posedness theory of the
solutions to equation \eqref{equ:nls} with the general
power type nonlinearities $F(u)=|u|^{p-1}u$ is standard.
For the defocusing energy subcritical($F(u)=-|u|^{p-1}u,~~1+\frac{4}{d}<p<1+\frac{4}{d-2}$) cases, the solutions with initial datum in $H^1(\R^d)$ are global well-posed and scatter,
see \cite{Cazenave-2003} \cite{killip2009nonlinear} and references therein.

In general domains, we do not have the dispersive estimate and the Strichartz estimates
for  $e^{it\Delta_\Omega}$.
For the case of exterior domain of a convex obstacle, Ivanovici
\cite{Ivanovici-2010-Apde} proved the Strichartz
estimates except endpoint case by using the Melrose and Taylor parametrix
 and she also proved  the scattering theory energy
 subcritical NLS for exterior domain of smooth convex obstacle in $3D$.
Ivanovici and Lebeau \cite{IL-2017-CRM} proved the dispersive estimates holds only in the $3D$ case.
 For more  scattering results of defocusing subcritical NLS in the general exterior domains, we refer to Planchon-Vega\cite{PV-2009-Bilinear},
Ivanovici-Planchon \cite{IP-2010-Poincare}, and Blair-Smith-Sogge\cite{BSS-2008-PAMS}.

In this paper, we consider scattering theory of the solutions to focusing equation \eqref{nls-ex-do}, which is mass supercritical and energy subcritical.
In fact, the nonlinear elliptic equation
\beq\label{ellip-eq}
-\Delta \vp +\vp= |\vp|^2\vp,
\eeq
has infinite number of solutions in $H^1(\R^3)$.
Then for any solution $\vp \in H^1(\R^3)$  to \eqref{ellip-eq}, $e^{it}\vp$ is a global and  non-scattering solution to the Cauchy problem \eqref{equ:nls}.
Furthermore, there exists a minimal mass solution and we often denote it as $Q$ and call it the ground state, which is  positive, radial, exponentially decaying, see Cazenave\cite{Cazenave-2003} and Tao\cite{Tao2006}.
Holmer-Roudenko\cite{HR-2008-CMP} proved  the global well-posedness and scattering theory for radial solutions to equation \eqref{equ:nls} such the following conditions in $\R^3$:
\begin{gather}\label{A}
E_{\R^3}(u_0)M_{\R^3}(u_0) < E_{\R^3}(Q)M_{\R^3}(Q), \tag{A}
\end{gather}
\begin{gather}\label{B}
 \norm{\nabla u_0}_{L^2(\R^3)} \norm{u_0}_{L^2(\R^3)}< \norm{\nabla Q}_{L^2(\R^3)} \norm{Q}_{L^2(\R^3)}. \tag{B}
\end{gather}
Duyckaerts-Holmer-Roudenko \cite{DHR-2008-MRL} removed the radial assumption.
Killip-Visan-Zhang \cite{KVZ-2016-AMR} proved the results for  exterior domains of convex obstacles  in $\R^3$:

\begin{theorem}\label{main-thm}
Let $ \Omega$ is  exterior of a convex obstacle in $\R^3$.  
If the initial data $u_0\in H^1_D(\Omega)$ satisfies
\begin{align}\label{equ:energy}
 E_{\Omega}(u_0)M_{\Omega}(u_0) <& E_{\R^3}(Q)M_{\R^3}(Q), \\\label{equ:kinerng}
\|\nabla u_0\|_{L^2(\Omega)} \|u_0\|_{L^2(\Omega)}<& \norm{\nabla Q}_{L^2(\R^3)} \norm{Q}_{L^2(\R^3)},
\end{align}
then,
the corresponding  solution to the Cauchy problem \eqref{nls-ex-do} with initial $u_0$ is globally well-posed and scatters.
\end{theorem}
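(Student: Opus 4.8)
\emph{Proof strategy.} The plan is to follow the Dodson--Murphy scheme, replacing the concentration-compactness input by an interaction Morawetz estimate together with a local smoothing effect of the boundary. \textbf{Step 1 (coercivity and global well-posedness).} Extending functions by zero embeds $H^1_D(\Omega)$ into $\dot H^1(\R^3)\cap L^2(\R^3)$ without changing the $\dot H^1$ and $L^2$ norms, so the sharp Gagliardo--Nirenberg inequality on $\R^3$ — whose optimal constant is realized by $Q$ — holds for functions on $\Omega$ and is never attained there. Combining this with the conservation laws \eqref{E-Om}--\eqref{M-Om} and the hypotheses \eqref{equ:energy}--\eqref{equ:kinerng}, a continuity argument in time produces a $\delta>0$, depending only on the deficits in \eqref{equ:energy}--\eqref{equ:kinerng}, with
\[
\|\nabla u(t)\|_{L^2(\Omega)}\|u(t)\|_{L^2(\Omega)}\;\le\;(1-\delta)\,\|\nabla Q\|_{L^2(\R^3)}\|Q\|_{L^2(\R^3)}
\]
on the whole maximal interval of existence. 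Hence $E_\Omega(u(t))\ges\|\nabla u(t)\|_{L^2(\Omega)}^2$, and with mass conservation this gives a uniform bound $\sup_t\|u(t)\|_{H^1_D(\Omega)}\le E$. The local theory on the exterior domain (Strichartz estimates and the Sobolev equivalence of Proposition \ref{eq-Sobolev}) then upgrades this to global well-posedness, and reduces scattering to the finiteness of a global Strichartz norm of $u$.

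\textbf{Step 2 (a scattering criterion).} Next I would feed the Ivanovici--Lebeau dispersive bound $\|e^{it\DO}\|_{L^1(\Omega)\to L^\infty(\Omega)}\les|t|^{-3/2}$ into the $TT^*$ argument to obtain the full Euclidean range of Strichartz estimates on $\Omega$, including the endpoint, after which the usual nonlinear estimates transfer. This yields a Tao/Dodson--Murphy type criterion: there are $\eps_0=\eps_0(E)>0$, $R=R(E)>0$ and $L_0=L_0(E)>0$ such that, with the origin placed inside the obstacle, if
\[
\inf_{t\in[T,T+L_0]}\ \int_{\{x\in\Omega:\,|x|\le R\}}|u(t,x)|^2\,dx\;\le\;\eps_0
\]
for some $T$, then $u$ has finite forward Strichartz norm and hence scatters forward in time (backward scattering being symmetric). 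Its proof is the Euclidean one: split the Duhamel formula from some $t_0\ll T$ into the free evolution of $u(t_0)$ — small on $[T,\infty)$ by dispersion — and the Duhamel integral over $[t_0,T]$, estimated by Strichartz and the smallness of the local mass together with the coercive control of $\|u\|_{L^\infty_t\dot H^{1/2}}$ from Step 1, and then bootstrap in the Strichartz norm.

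\textbf{Step 3 (boundary local smoothing and interaction Morawetz — the main point).} It then remains to verify the hypothesis of Step 2. I would first prove a local smoothing effect of the boundary: a Morawetz multiplier adapted to the strictly convex obstacle produces, upon integration by parts on $\pa\Omega$, a boundary term of definite sign (the second fundamental form of $\pa\Omega$ being positive), which after absorbing this term gives a bound of the shape
\[
\int_I\!\int_{\pa\Omega}|\pa_\nu u(t,x)|^2\,d\sigma(x)\,dt\;\les\;\sup_{t\in I}\|u(t)\|_{H^1_D(\Omega)}^2,
\]
uniform in the interval $I$. I would then run the interaction Morawetz estimate on $\Omega$ with weight $|x-y|$: the Euclidean computation survives, the focusing contribution of the nonlinearity is absorbed using the coercivity of Step 1, and the new $\pa\Omega$-terms generated by integration by parts are controlled by the boundary local smoothing estimate and strict convexity, producing $\int_I\!\int_\Omega|u(t,x)|^4\,dx\,dt\les C(E)$ uniformly in $I$. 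Consequently $\|u(t)\|_{L^4(\Omega)}$ is small on an arbitrarily long subinterval of $[T,\infty)$ once $T$ is large, and Hölder on $\{|x|\le R\}$ together with the mass bound converts this into the smallness of $\int_{\{|x|\le R\}}|u(t,x)|^2\,dx$ demanded in Step 2. Being translation-covariant, the same interaction Morawetz bound also precludes a lump of mass running off to spatial infinity, so the momentum-dependent recentering used in the Euclidean non-radial argument — unavailable on $\Omega$ — is not needed.

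\textbf{The hard part.} The crux is Step 3: one must extract from the geometry a boundary smoothing gain that is \emph{quantitatively} strong enough both to close the interaction Morawetz estimate on $\Omega$ against the $\pa\Omega$-terms it generates and to coexist with the focusing sign of the nonlinearity, which is controlled only through the coercivity of Step 1. Choosing the Morawetz weights so that the convexity-induced boundary contributions genuinely dominate the errors — rather than merely carry a favorable sign — and checking that the loss of exact scaling and Galilean symmetry on $\Omega$ does not spoil the virial-type positivity, is where the real work lies; the remaining steps are transplants of the Euclidean theory once the Strichartz estimates on $\Omega$ are in hand.
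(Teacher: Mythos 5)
Your Steps 1 and 2 match the paper in spirit: coercivity below the ground state via the Pohozaev/sharp Gagliardo--Nirenberg comparison, global $H^1_D$ bound, and a Dodson--Murphy style scattering criterion built from the Ivanovici--Lebeau dispersive estimate, the Keel--Tao endpoint, and the Sobolev equivalence of Proposition~\ref{eq-Sobolev}. (The paper's criterion, Proposition~\ref{sct-cri}, is smallness of $\|u\|_{L^5_{t,x}}$ on a long subinterval of every sufficiently long window, not smallness of local mass near a fixed center; more on this below.)

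The genuine gap is in Step~3, and it is exactly the place you flag as the ``hard part''. Two specific problems. First, you assert the \emph{uniform-in-$I$} local smoothing bound $\int_I\int_{\partial\Omega}|\partial_\nu u|^2\,d\sigma\,dt\lesssim \sup_t\|u(t)\|_{H^1_D}^2$ and the \emph{uniform-in-$I$} interaction Morawetz bound $\int_I\int_\Omega|u|^4\,dx\,dt\lesssim C(E)$. In the \emph{focusing} case neither follows from the convexity sign alone, because the $-\tfrac34|u|^4$ term enters the virial identity with an adverse sign and must be absorbed by coercivity. Coercivity under the ground state is a statement about $\|\nabla f\|_{L^2}^2-\tfrac34\|f\|_{L^4}^4$; it does not directly control the \emph{weighted} combination $\int a'(|x|)|\nabla u|^2-\tfrac34\int a'(|x|)|u|^4$ that a Morawetz weight $a(x)=|x|$ or $|x-y|$ produces. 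This is precisely why Dodson--Murphy, and the present paper, write the weight as a convolution of cutoffs and apply ball-localized coercivity (Lemma~\ref{coer-2}) at each center $s$. Once you introduce a cutoff scale $R$, the resulting error terms force a hierarchy $\eta, R_0, J, T_0$, and what one actually proves is a \emph{decaying time-average}, $\tfrac{1}{T_0}\int_I\int_{\partial\Omega}|\partial_n u|^2\,dS\,dt\lesssim(\log T_0)^{-1/2}$, not a uniform-in-$I$ bound; similarly for the interaction Morawetz quantity \eqref{inter-morawetz}. Your proposal does not explain how the coercivity of Step~1 is to be applied inside the weighted Morawetz integrals, so the uniform bounds you write down are not justified.

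Second, there is a subtlety with the non-radial structure that your sketch passes over. The interaction Morawetz identity on $\Omega$ contains a momentum cross-term $\Im(\bar u\nabla u)(y)\cdot\Im(\bar u\nabla u)(x)$ whose sign is uncontrolled; on $\mathbb{R}^3$ this is usually killed by a global Galilean boost to zero total momentum, but that device is unavailable on $\Omega$. The paper eliminates it by a \emph{local} modulation $u^\xi=e^{ix\cdot\xi}u$ with $\xi=\xi(t,s,R)$ depending on the cutoff center, and then averages the cutoffs over a lattice of centers $\tfrac{R}{4}(z+\theta)$, $z\in\mathbb{Z}^3$, to convert the averaged Morawetz estimate into an $L^3_{t,x}$ (hence, by interpolation, $L^5_{t,x}$) smallness on a subinterval — feeding the $L^5$ scattering criterion. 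A local-mass criterion centered at a fixed origin inside the obstacle (your Step~2) is essentially a radial criterion and would not close for non-radial data unless you also supply a translation parameter, and ``translation-covariance of the interaction Morawetz precludes a lump running off'' is a heuristic, not a proof. So while the overall architecture of your plan mirrors the paper, the quantitative core of Step~3 — how coercivity is localized, how the parameters are tied together, how the momentum cross-term is handled, and how the averaged Morawetz bound is converted into a usable space-time smallness — is missing, and the uniform-in-$I$ statements as written are too strong.
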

The proofs of \cite{DHR-2008-MRL} and \cite{KVZ-2016-AMR} utilized the  concentration-compactness arguments basing on the profile decomposition
introduced by Kenig-Merle \cite{KM-2006-Invent,kenig2008global}, which have become powerful and effective methods for many dispersive equations and many  other equations.

In this article, we revisit  Theorem
\ref{main-thm}, by employing an
%
%
%
%
%
idea of  Dodson-Murphy \cite{DM-2017-PAMS},
\cite{DM-2017-np-nonradial}, which provide
new proofs in the Euclidean case avoiding uses of
concentration and compactness.

{\bf Outline of proof:}
By the Strichartz estimates and  the equivalence of various Sobolev norm definitions, we have the local well-posedness of  \eqref{nls-ex-do} in $H^1_D(\Omega)$.
From the coercivity property(Lemma \ref{Coercivity} below) under the ground state,
we know the solution $u$ is globally well-posed and of bounded
$H^1_D(\Omega )$ norm.
Utilizing the dispersive estimates, we prove that
the scattering criterion given by
\cite{DM-2017-np-nonradial} also holds in our case, that is:
if for any large time window, there exists a
large subinterval such that a space-time norm of
of $u$ is  small in it, then $u$ must scatter.

%


To end the proof, the main difficulties are how
to overcome the effect from  boundary $\pa\Omega$ and
the lack of the Galilean invariance.
Combining with the concavity of  $\pa\Omega$ and the coercivity property,
the Morawetz estimates yields a weaker local smoothing effect 
on the boundary.
On the other hand, as in  \cite{DHR-2008-MRL}, for the Euclidean case,
by the Galilean invariance, one can
assume the critical solution $u_c$
has zero conserved  momentum, which yields the
spatial translation parameter 
$x(t)=o(t)$  (as $ t \ra \infty$).
This fact is essential to the preclusion of
the critical solution by making use of the Morawetz estimates centered at origin.
For our case, the momentum is obvious
bounded since $u\in L^\infty_tH^1_x$.
Based on this fact, one could just expect $|x(t)|\les |t|.$
However, the interaction Morawetz
identity is defined as an average of the Morawetz action
that is centered any point in $\R^3$.
Fortunately, since $u\in L^\infty_tH^1_x$, we are able to prove the
smallness $L^3_{t,x}$-norm
in a large subinterval of any large time
interval without employing the Galilean transformation.

Finally, this and a standard continuity argument
imply the solution such that the conditions of the scattering criterion.

%

\begin{remark}

Our proof is based on the
the dispersive estimates of \cite{IL-2017-CRM}, which
does not hold true in higher dimensions.
Nevertheless, in these cases, it is hopeful that one may prove
the corresponding results 
via establishing weaker dispersive estimates(see for example \cite{Zheng-2018-JMP}).

\end{remark}

\begin{remark}
We remark that  the interaction Morawetz estimates
also reflect that the solution decays in big ball
around any point.
In fact, for any fixed $R>0$, we have
\beqq
\liminf_{t\ra\infty} \sup_{x(t)\in\R^3}\norm{u(t,\cdot-x(t))}_{L^2_x(\Omega\cap B_R)} ~= ~0,
\eeqq
where $B_r$ is the ball center at origin with radius $r$.
This suffices the scattering criterion for non-radial NLS \eqref{nls-ex-do} in \cite{Tao2004DPDE} when $\Omega =\R^3$.
\end{remark}

\begin{remark}
In fact, as in  \cite{Tao-2005-NYJM}  and \cite{DM-2017-np-nonradial},
one can check that our proof would imply
\beqq
\norm{u}_{L^5_{t,x}(\R\times\Omega)} \les \exp\{ \exp{A(E(u_0),M(u_0))} \},
\eeqq
where $A$ is a  rational  polynomial of $E(u_0),M(u_0)$ and $ E(Q),M(Q)$.
The double-exponential growth derives  from the local smoothing
effect of boundary and the interaction Morawetz estimates.
\end{remark}

\begin{remark}
Our arguments can be used to prove the similar results for general focusing energy subcritical  cases($F(u)=-|u|^{p-1}u$, $\frac73<p<5$), which has been considered in \cite{Kai-2017-CPAA}.

\end{remark}

This article is organized as follows: in Section 2,
 we recall some basics facts on the domain.
Section 3 is devoted to prove the scattering under the
 assumption of smallness of $L^5_{t,x}$ norm of the solution.
In Section 4, we verify the scattering criterion.

We conclude the introduction by giving some notations which
will be used throughout this paper. We always use $X\lesssim Y$ to denote $X\leq CY$ for some constant $C>0$. $X \sim Y$  stands for $X\lesssim Y$ and $Y\lesssim X$.  Similarly, $X\lesssim_{u} Y$ indicates there exists a constant $C:=C(u)$ depending on $u$ such that $X\leq C(u)Y$.
The symbol $\nabla$ refers to the spatial derivation.
For $M=\R^3$ or a domain in $\R^3$, we use $L^r(M)$ to denote the Banach space
of functions $f:M\rightarrow\mathbb{C}$ whose norm
$$\|f\|_{L^r(M)}=\Big(\int_{M}|f(x)|^r dx\Big)^{\frac1r}$$
is finite, with the usual modifications when $r=\infty$.
For a time slab $I$, we use $L_{t}^qL^r_x(I\times M)$ to denote the space-time norm 
\begin{align*}
  \|f\|_{L_{t}^qL^r_x(I\times M)}=\bigg(\int_{I}\|f(t,x)\|_{L^r_x(M)}^q dt\bigg)^\frac{1}{q}
\end{align*}
with the usual modifications when $q$ or $r$ is infinite. 

\section{Basic tools and the local theory}

In this section we give some basic harmonic tools and the local well-posedness theory
for the Cauchy problem \eqref{nls-ex-do}. In this section, we assume that $\Omega$ is the complement of a compact convex body $\Omega^c\subset \R^3$ with smooth boundary.

First, we recall the following proposition.
\begin{proposition}[Equivalence of the Sobolev norms, \cite{KVZ-2015-IMRN}]\label{eq-Sobolev}

Let $1<p<\infty $. If $0\leq s <\min\{1+\frac1p,\frac3p \}$, then

\begin{equation}
\norm{(-\Delta_{\R^3})^\frac{s}{2} f}_{L^p(\R^3)}
\sim_{p,s} \norm{(-\DO)^\frac{s}{2} f}_{L^p(\Omega)}
\end{equation}
for all $f\in C_c^\infty(\Omega)$.

\end{proposition}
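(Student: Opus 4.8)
Everything hinges on pointwise kernel estimates for the Dirichlet heat semigroup $e^{t\DO}$ that are available because $\Omega$ is the exterior of a \emph{strictly convex} obstacle with smooth boundary: domain monotonicity gives the Gaussian bound
\[
0\le e^{t\DO}(x,y)\le e^{t\Delta_{\R^3}}(x,y)=(4\pi t)^{-3/2}e^{-|x-y|^2/4t},
\]
and, crucially, convexity also yields the matching gradient bound $|\nabla_x e^{t\DO}(x,y)|\les t^{-2}e^{-c|x-y|^2/t}$ (these are established in the companion work of Killip--Visan--Zhang). I would take these as given and build the equivalence on them.

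First, the endpoints. For $s=0$ both norms equal $\norm{f}_{L^p}$. For $s=1$, extension by zero identifies $f\in C_c^\infty(\Omega)$ with some $\tld f\in C_c^\infty(\R^3)$ without altering its gradient, so $\norm{\nabla f}_{L^p(\Omega)}=\norm{\nabla\tld f}_{L^p(\R^3)}\sim\norm{(-\Delta_{\R^3})^{1/2}\tld f}_{L^p(\R^3)}$ by the Euclidean Riesz transform bounds; comparing the left side with $\norm{(-\DO)^{1/2}f}_{L^p(\Omega)}$ then needs the two-sided Riesz transform estimate $\norm{\nabla(-\DO)^{-1/2}}_{L^p\to L^p}+\norm{(-\DO)^{-1/2}\nabla}_{L^p\to L^p}\les1$ on $\Omega$, which the kernel bounds above supply through the standard Calder\'on--Zygmund and square-function machinery, in a range of $p$ bounded above by a threshold forced by the obstacle (one gets $1<p<3$). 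Complex interpolation between $s=0$ and $s=1$ then gives the equivalence for all $0\le s\le1$ in that range of $p$, which already covers the part of the asserted region with $p\le\tfrac32$.

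The substantive step is to reach $s$ up to (just below) $\min\{1+\tfrac1p,\tfrac3p\}$ and, for $p\ge3$, to circumvent the failure of the $\Omega$-Riesz transform. Here I would not go through $\nabla$ at all but compare $(-\DO)^{s/2}$ with $(-\Delta_{\R^3})^{s/2}$ directly, via the subordination representation $(-\Delta)^{s/2}=c_s\int_0^\infty t^{-s/2}\bigl(1-e^{t\Delta}\bigr)\tfrac{dt}{t}$ (valid for $0<s<2$), so that on $\Omega$
\[
(-\DO)^{s/2}f-(-\Delta_{\R^3})^{s/2}\tld f=c_s\int_0^\infty t^{-s/2}\bigl(e^{t\Delta_{\R^3}}\tld f-e^{t\DO}f\bigr)\,\tfrac{dt}{t}.
\]
The kernel $K_t(x,y):=e^{t\Delta_{\R^3}}(x,y)-e^{t\DO}(x,y)\ge0$ of the inner difference is dominated by the full Gaussian \emph{with an extra gain near the obstacle} --- by the maximum principle and the gradient estimate one expects $K_t(x,y)\les\bigl(\tfrac{\sqrt t}{d(x)}\wedge1\bigr)^{\theta}\bigl(\tfrac{\sqrt t}{d(y)}\wedge1\bigr)^{\theta}t^{-3/2}e^{-c|x-y|^2/t}$ with $d(\cdot)=\mathrm{dist}(\cdot,\partial\Omega)$ and some $\theta>0$. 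Feeding this into Schur-type bounds, together with Hardy--Littlewood--Sobolev to absorb $(-\Delta_{\R^3})^{-s/2}$ acting on the data (this is what produces the ceiling $s<\tfrac3p$), shows the difference operator is bounded on $L^p$; a duality argument then upgrades the one-sided bound to the equivalence and, when $p<3$, lets $s$ exceed $1$ up to $1+\tfrac1p$.

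The main obstacle is exactly this last estimate: establishing the near-boundary gain in $K_t$ with a usable exponent $\theta$ and organizing the $t$-integral so that it closes precisely on $s<\min\{1+\tfrac1p,\tfrac3p\}$ rather than a worse range. That is where the smoothness and strict convexity of $\partial\Omega$ genuinely enter; the interpolation, duality, and Euclidean Riesz-transform inputs are routine, and I would cite Killip--Visan--Zhang for the heat-kernel bounds themselves.
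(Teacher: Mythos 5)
The paper does not prove this proposition: it is imported verbatim from Killip--Visan--Zhang, \emph{Riesz transforms outside a convex obstacle} (\cite{KVZ-2015-IMRN}), and nothing further is said. So there is no in-paper argument to compare against; the honest ``proof'' here is the citation. That said, your sketch is in the right family --- KVZ's argument does indeed run through Gaussian and gradient bounds for the Dirichlet heat kernel on the exterior of a convex body, $L^p$-boundedness of the Riesz transforms in the range dictated by the obstacle, imaginary-power/Littlewood--Paley machinery, and a comparison of $(-\Delta_\Omega)^{s/2}$ with $(-\Delta_{\mathbb R^3})^{s/2}$ --- so citing them for the kernel inputs and for the final result is exactly the right instinct.

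Where your reconstruction has genuine gaps: (i) the near-boundary decay of $K_t=e^{t\Delta_{\mathbb R^3}}-e^{t\Delta_\Omega}$ is the entire content of the hard direction, and the product form $(\sqrt t/d(x)\wedge1)^\theta(\sqrt t/d(y)\wedge1)^\theta$ you posit is not obviously the right shape --- from the two-sided heat kernel bounds one more naturally obtains a \emph{sum} of such factors (since $1-ab\le(1-a)+(1-b)$), which changes the Schur-test bookkeeping and hence the range of $(s,p)$ you can close; (ii) the claim that interpolating $0\le s\le1$ over $1<p<3$ ``already covers $p\le\frac32$'' is off --- for $p\le\frac32$ the asserted ceiling is $1+\frac1p>1$, so the $s>1$ part of the region is not reached by that interpolation and still needs the subordination/direct-comparison step; (iii) the attribution of the $s<\frac3p$ cutoff to Hardy--Littlewood--Sobolev is hand-waved, whereas in KVZ it is tied to a concrete counterexample near $\partial\Omega$ showing sharpness, not merely to HLS convergence. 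You flag (i) yourself as ``the main obstacle,'' which is fair, but it is precisely the step one cannot take as given when claiming a proof. As a summary of what a proof would need, your proposal is reasonable; as a proof it is incomplete, and the appropriate move in this paper is exactly what the authors do: cite \cite{KVZ-2015-IMRN}.
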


Using this proposition, we have
\begin{corollary}[Fractional product rule, \cite{KVZ-2015-IMRN}]
 For all $f,g\in C_c^\infty(\Omega)$, we have

 \beqq
 \norm{(-\Delta_\Omega)^{\frac{s}{2}}(fg) }_{L^p(\Omega)} \les
 \norm{(-\Delta_\Omega)^{\frac{s}{2}} f}_{L^{p_1}(\Omega )} \norm{g}_{L^{p_2}(\Omega)} +
 \norm{f}_{L^{q_1(\Omega)}} \norm{(-\Delta_\Omega)^\frac{s}{2} g}_{L^{q_2}(\Omega)}
 \eeqq
  with the exponents satisfying $1<p,p_1,q_2 \leq \infty$, $1<p_2,q_1\leq \infty$,
  \beqq
  \frac1p=\frac1{p_1}+\frac1{p_2}=\frac1{q_1}+\frac1{q_2}, \,\,\text{ and }  \,\, 0<s<\min\left\{1+\frac1{p_1},1+\frac1{q_2},\frac{3}{p_1},\frac{3}{q_2}\right\}.
  \eeqq

\end{corollary}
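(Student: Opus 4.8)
The plan is to push the inequality forward to $\R^3$ via Proposition~\ref{eq-Sobolev}, apply the classical Euclidean fractional Leibniz rule there, and pull the two resulting factors back to $\Omega$. First I would record the consistency of exponents: since $s>0$, the hypothesis $s<\min\{\frac3{p_1},\frac3{q_2}\}$ forces $p_1,q_2<3/s<\infty$, and since $\frac1p=\frac1{p_1}+\frac1{p_2}\ge\frac1{p_1}$ we get $1<p<\infty$ together with $s<\min\{1+\frac1p,\frac3p\}$. Hence Proposition~\ref{eq-Sobolev} is applicable at each of the three exponents $p$, $p_1$, $q_2$, and the admissible ranges $0\le s<\min\{1+\frac1r,\frac3r\}$ for $r\in\{p,p_1,q_2\}$ all hold. (The remaining exponents $p_2,q_1$ may be $\infty$, but no Sobolev transfer is needed at those slots.)

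Next I would observe that $f,g\in\tsf$ implies $fg\in\tsf$, so Proposition~\ref{eq-Sobolev} at exponent $p$ gives
\[
\norm{(-\Delta_\Omega)^{\frac s2}(fg)}_{L^p(\Omega)}\les\norm{(-\Delta_{\R^3})^{\frac s2}(fg)}_{L^p(\R^3)}.
\]
Regarding $f,g$ as Schwartz functions on $\R^3$ (extended by zero outside $\Omega$, so that they lie in $C_c^\infty(\R^3)$), the classical fractional product rule of Christ--Weinstein / Kato--Ponce type --- valid for $s>0$ and precisely the stated range $1<p<\infty$, $1<p_1,q_2\le\infty$, $1<p_2,q_1\le\infty$, $\frac1p=\frac1{p_1}+\frac1{p_2}=\frac1{q_1}+\frac1{q_2}$ --- yields
\[
\norm{(-\Delta_{\R^3})^{\frac s2}(fg)}_{L^p(\R^3)}\les\norm{(-\Delta_{\R^3})^{\frac s2}f}_{L^{p_1}(\R^3)}\norm{g}_{L^{p_2}(\R^3)}+\norm{f}_{L^{q_1}(\R^3)}\norm{(-\Delta_{\R^3})^{\frac s2}g}_{L^{q_2}(\R^3)}.
\]
Because $f$ and $g$ are supported in $\Omega$, the plain Lebesgue norms satisfy $\norm{g}_{L^{p_2}(\R^3)}=\norm{g}_{L^{p_2}(\Omega)}$ and $\norm{f}_{L^{q_1}(\R^3)}=\norm{f}_{L^{q_1}(\Omega)}$. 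Finally, applying Proposition~\ref{eq-Sobolev} in the reverse direction at the exponents $p_1$ and $q_2$ gives $\norm{(-\Delta_{\R^3})^{\frac s2}f}_{L^{p_1}(\R^3)}\les\norm{(-\Delta_\Omega)^{\frac s2}f}_{L^{p_1}(\Omega)}$ and $\norm{(-\Delta_{\R^3})^{\frac s2}g}_{L^{q_2}(\R^3)}\les\norm{(-\Delta_\Omega)^{\frac s2}g}_{L^{q_2}(\Omega)}$; chaining the three displays proves the corollary.

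I do not expect a genuine analytic obstacle here: the corollary is a transplant of the flat-space Leibniz rule through the norm equivalence of Proposition~\ref{eq-Sobolev}, and the only thing requiring care is the bookkeeping --- making sure each invocation of Proposition~\ref{eq-Sobolev} stays inside its admissible exponent range, and that the Euclidean bilinear estimate is quoted in the version allowing one factor in $L^\infty$. Both are guaranteed by the hypotheses as noted above, so the proof is essentially a matter of stringing the three estimates together.
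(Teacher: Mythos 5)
Your proof is correct and is essentially the approach the paper intends: the phrase ``Using this proposition, we have'' immediately before the corollary signals exactly the transfer-to-$\R^3$/Kato--Ponce/transfer-back argument you carried out, and your exponent bookkeeping (deriving $1<p<\infty$ and $s<\min\{1+\tfrac1p,\tfrac3p\}$ from the stated hypotheses, and noting $p_1,q_2<\infty$ automatically) fills in the only details that require care.
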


\begin{corollary}[Fractional chain rule,\cite{KVZ-2015-IMRN}]\label{fcr-KVZ}
Suppose $G\in C^1(\mathbb{C})$, $s\in (0,1]$, and $1<p,p_1,p_2<\infty$ are such that
$\frac1p=\frac{1}{p_1}+\frac{1}{p_2}$ and $0<s <\min\left\{ 1+\frac{1}{p_2},\frac3{p_2}\right\}$. Then

\beqq
\norm{(-\Delta_\Omega)^{\frac{s}{2} } G(f)}_{L^p(\Omega)} \les_{s,p,p_1} \norm{G'(f)}_{L^{p_1}(\Omega)} \norm{(-\DO)^{\frac{s}{2}} f}_{L^{p_2} }.
\eeqq
\end{corollary}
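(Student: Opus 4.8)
The plan is to transplant the inequality to Euclidean space, where the fractional chain rule is classical, and then transfer it back to $\Omega$ by means of the equivalence of Sobolev norms in Proposition~\ref{eq-Sobolev}. Throughout I work with $f\in C_c^\infty(\Omega)$, extended by zero to all of $\R^3$. Since such an $f$ has compact support, $G'(f)$ equals the constant $G'(0)$ outside a compact set, so on the unbounded domain $\Omega$ the right-hand side $\norm{G'(f)}_{L^{p_1}(\Omega)}$ is finite only when $G'(0)=0$ (except in the trivial book-keeping case $p_1=\infty$, where the supremum is anyway realized inside $\Omega$); likewise $(-\DO)^{s/2}G(f)$ is meaningful only after removing the constant $G(0)$. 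I therefore reduce at the outset to the case $G(0)=G'(0)=0$, which is legitimate since subtracting $G(0)$ does not alter $G'$ and the remaining case is vacuous; this is exactly the situation in the later applications, where $G(u)=\pm|u|^{p-1}u$. With this reduction and the zero extension, $G(f)$ is a $C^1$ function supported in a compact subset of $\Omega$, and $G'(f)$ is likewise supported in $\Omega$, so that $\norm{G'(f)}_{L^{p_1}(\R^3)}=\norm{G'(f)}_{L^{p_1}(\Omega)}$.

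Next I record the admissibility of the exponents. The relation $\tfrac1p=\tfrac1{p_1}+\tfrac1{p_2}$ forces $p<p_2$, so the hypothesis $0<s<\min\{1+\tfrac1{p_2},\tfrac3{p_2}\}$ implies $0\le s<\min\{1+\tfrac1q,\tfrac3q\}$ for every $q\in\{p,p_2\}$; hence Proposition~\ref{eq-Sobolev}, extended from $C_c^\infty(\Omega)$ to the completion $\dot H^{s,q}_D(\Omega)$ by density, applies at the exponents $p$ and $p_2$. Since $G(f)$ is $C^1$ with compact support in the open set $\Omega$, a mollification inside $\Omega$ produces a sequence in $C_c^\infty(\Omega)$ converging to $G(f)$ in $W^{1,q}$, hence in $\dot H^{s,q}_D(\Omega)$ for $s\in(0,1]$; thus Proposition~\ref{eq-Sobolev} may be invoked on $G(f)$ as well. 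The heart of the argument is then the chain
\begin{align*}
\norm{(-\DO)^{s/2}G(f)}_{L^p(\Omega)}
&\sim \norm{(-\Delta_{\R^3})^{s/2}G(f)}_{L^p(\R^3)}\\
&\les_{s,p,p_1} \norm{G'(f)}_{L^{p_1}(\R^3)}\,\norm{(-\Delta_{\R^3})^{s/2}f}_{L^{p_2}(\R^3)}\\
&= \norm{G'(f)}_{L^{p_1}(\Omega)}\,\norm{(-\Delta_{\R^3})^{s/2}f}_{L^{p_2}(\R^3)}\\
&\sim \norm{G'(f)}_{L^{p_1}(\Omega)}\,\norm{(-\DO)^{s/2}f}_{L^{p_2}(\Omega)},
\end{align*}
in which the first and last equivalences are Proposition~\ref{eq-Sobolev}, the equality is the reduction from the previous paragraph, and the middle estimate is the classical Euclidean fractional chain rule of Christ--Weinstein, valid since $G\in C^1$, $s\in(0,1]$, and $\tfrac1p=\tfrac1{p_1}+\tfrac1{p_2}$ with $1<p_1\le\infty$ and $1<p_2<\infty$ (see e.g.\ \cite{Cazenave-2003,Tao2006} and references therein).

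The genuinely non-routine point — the main obstacle — is the interplay of this transplantation with the boundary and the unboundedness of $\Omega$: one must carefully justify that $G(f)$, which is only $C^1$ rather than smooth and which is produced by a nonlinear operation, is an admissible input to Proposition~\ref{eq-Sobolev} (i.e.\ that it lies in $\dot H^{s,p}_D(\Omega)$ and that the norm equivalence survives passage to the completion), and one must make sure, as arranged above, that no constant-at-infinity contribution from $G(0)$ or $G'(0)$ enters the Euclidean right-hand side — for such a contribution would be finite on $\Omega^c$ but infinite on $\Omega$, rendering the Euclidean estimate useless for controlling the domain quantity on the left. Everything else — the density extension of Proposition~\ref{eq-Sobolev} and the Euclidean chain rule itself — is standard. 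The same scheme (apply Proposition~\ref{eq-Sobolev}, then the corresponding Euclidean inequality, then Proposition~\ref{eq-Sobolev} again) likewise yields the fractional product rule of the preceding corollary.
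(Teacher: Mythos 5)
Your argument is correct and coincides with the route the paper implicitly intends: the corollary is stated without proof, cited from \cite{KVZ-2015-IMRN}, and the only sensible derivation from what the paper sets up is exactly your transplantation — apply Proposition~\ref{eq-Sobolev} to pass from $\Omega$ to $\R^3$, invoke the Euclidean fractional chain rule, and pass back. Your exponent check is right: $\frac1p=\frac1{p_1}+\frac1{p_2}$ with $p_1<\infty$ gives $p<p_2$, so the hypothesis $s<\min\{1+\tfrac1{p_2},\tfrac3{p_2}\}$ automatically places both $q=p$ and $q=p_2$ inside the admissible range of Proposition~\ref{eq-Sobolev} (and in particular forces $p,p_2<3$ when $s=1$, so the $s=1$ endpoint is also covered). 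The normalization $G(0)=G'(0)=0$ is a genuine and necessary observation, since $\Omega$ is unbounded and a nonzero constant is not even in the domain of $(-\DO)^{s/2}$; it is also harmless for the application $G(u)=\pm|u|^2u$. One small caveat worth flagging: the Euclidean estimate $\||\nabla|^s G(f)\|_{L^p}\lesssim\|G'(f)\|_{L^{p_1}}\||\nabla|^s f\|_{L^{p_2}}$ is usually proved under a bit more than bare $G\in C^1$ (e.g.\ $G'$ H\"older continuous, or a monotonicity/growth hypothesis as in Christ--Weinstein), so to be fully rigorous one should either import the exact hypotheses from the Euclidean reference or note that $G(u)=|u|^2u$ satisfies them; this is a precision issue in the statement itself, not a flaw in your transfer argument. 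Likewise, the density step placing $G(f)\in C^1_c(\Omega)$ into the completion $\dot H^{s,q}_D(\Omega)$ is fine but deserves the one extra line you give it, since $(-\DO)^{s/2}$ is a priori defined only on $C_c^\infty(\Omega)$.
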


We need the chain rule for fractional derivatives on $\R^d$, which will be useful for
the local theory.
\begin{proposition}[Chain rule for fractional derivatives,\cite{KPV-1993-CPAM}]\label{fcr-KPV}
  If $F\in C^2$, with $F(0)=0,F'(0)=0,$ and $|F''(a+b)|\leq C\{ |F''(a)| +|F''(b)| \}$, and $|F'(a+b)|\leq
  C\{ |F'(a)| +|F'(b)| \},$ we have, for $0<\al<1,$

  \beqq
  \norm{ \Lambda^\al
     F(u) }_{L^p_x(\R^d)} \leq C \norm{F'(u)}_{L^{p_1}(\R^d)}
  \norm{\Lambda^\al  u }_{L^{p_2}(\R^d)}, \, \frac{1}{p}=\frac{1}{p_1} +\frac{1}{p_2},
  \eeqq
and
  \beqq
    \begin{split}
  &    \norm{\Lambda^\al   [F(u)-F(v)] }_{L^p_x(\R^d)} \\
      \leq  ~~& \,C [\norm{F'(u)}_{L^{p_1}(\R^d)}  +\norm{F'(v)}_{L^{p_1}(\R^d)}
         ] \norm{\Lambda^\al (u-v) }_{L^{p_2}(\R^d)}\\
      &+ C \left\||F''(u)|+|F''(v)|\right\|_{L^{r_1}(\R^d)}
      \left(\norm{\Lambda^\al u }_{L^{r_2}(\R^d)}+\norm{ \Lambda^\al  v }_{L^{r_2}(\R^d)} \right) \norm{(u-v) }_{L^{r_3}(\R^d)},
    \end{split}
  \eeqq
where $\Lambda=(-\Delta_{\R^3})^{\frac12}. $
\end{proposition}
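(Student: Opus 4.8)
The plan is to establish the product-type estimate first, by the classical difference quotient / square function method, and then to deduce the difference estimate by running the same scheme on a second-order Taylor expansion of $F$. The starting point is a Triebel--Lizorkin square function characterization of the homogeneous fractional Sobolev norm: for $0<\al<1$ and $1<p<\infty$,
\beqq
\norm{\Lambda^\al g}_{L^p(\R^d)}\sim \norm{\,\Big(\int_0^\infty\Big(\tfrac{1}{t^{d}}\int_{|y|\le t}\tfrac{|g(x)-g(x-y)|^2}{t^{2\al}}\,dy\Big)\tfrac{dt}{t}\Big)^{1/2}\,}_{L^p_x(\R^d)}
\eeqq
(equivalently, one may work with Littlewood--Paley pieces, using $\Delta_j g(x)=\int 2^{jd}\psi(2^j(x-y))(g(x)-g(y))\,dy$ since $\int\psi=0$, together with the square function $\norm{\big(\sum_j 4^{j\al}|\Delta_j g|^2\big)^{1/2}}_{L^p}$).

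Taking $g=F(u)$, the mean value theorem gives $F(u(x))-F(u(x-y))=(u(x)-u(x-y))\int_0^1 F'(\tau u(x)+(1-\tau)u(x-y))\,d\tau$, and the structural hypotheses on $F$ are tailored precisely so that the intermediate value can be absorbed into the endpoints, yielding the pointwise linearization
\beqq
|F(u(x))-F(u(x-y))|\les\big(|F'(u(x))|+|F'(u(x-y))|\big)\,|u(x)-u(x-y)|.
\eeqq
Inserting this and expanding the square, the contribution of $|F'(u(x))|$ factors out of the $t$- and $y$-integrations, leaving exactly the square function of $u$; then H\"older's inequality with $\tfrac1p=\tfrac1{p_1}+\tfrac1{p_2}$ and the characterization above bound this piece by $\norm{F'(u)}_{L^{p_1}(\R^d)}\norm{\Lambda^\al u}_{L^{p_2}(\R^d)}$, which is the desired bound. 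The contribution of $|F'(u(x-y))|$ is the genuine work, since one cannot simply pull the translated factor out of the scale integral: the standard remedy is to dyadically decompose the $t$-integral, control the $y$-average of a suitable power of $|F'(u)|$ on each scale by a Hardy--Littlewood maximal function, and then resum the scales against the square function of $u$ by the Fefferman--Stein vector-valued maximal inequality; as the relevant power maximal operator is bounded on $L^{p_1}(\R^d)$ once its exponent is $<p_1$, a further H\"older step again produces $\norm{F'(u)}_{L^{p_1}}\norm{\Lambda^\al u}_{L^{p_2}}$. (An alternative to the square function is to use the singular integral representation $\Lambda^\al g(x)=c\int (g(x)-g(x-y))|y|^{-d-\al}\,dy$ and a stopping-time splitting of the $y$-integral, as in Christ--Weinstein.)

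For the difference estimate I would run the same template with $F(u)-F(v)$ in place of $F(u)$: writing $F(u)-F(v)=(u-v)\int_0^1 F'(v+\tau(u-v))\,d\tau$ and expanding $F'$ once more by the mean value theorem produces two structural terms --- one carrying $|F'(u)|+|F'(v)|$ and the difference quotient of $u-v$, the other carrying $|F''(u)|+|F''(v)|$ together with the difference quotients of $u$ and $v$ and the factor $(u-v)$ --- which after the same square function / maximal function manipulations reproduce exactly the two terms in the statement; here the hypothesis $|F''(a+b)|\le C(|F''(a)|+|F''(b)|)$ plays the role that $|F'(a+b)|\le C(|F'(a)|+|F'(b)|)$ played in the first estimate. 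I expect the main obstacle to be precisely the control of this ``translated'' term: making the passage through the maximal function and Fefferman--Stein steps rigorous is where the strict ranges $1<p,p_1,p_2<\infty$ are essential and where the scalar Hardy--Littlewood theorem must be upgraded to its vector-valued form.
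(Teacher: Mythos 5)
The paper states this proposition as a black box, citing it to Kenig--Ponce--Vega (CPAM 1993) without reproducing an argument, so there is no internal proof to compare against. Your sketch faithfully reconstructs the strategy actually used in that reference (and its descendants, e.g.\ Christ--Weinstein and Taylor): a square-function characterization of $\Lambda^\alpha$ for $0<\alpha<1$, the pointwise linearization $|F(a)-F(b)|\lesssim(|F'(a)|+|F'(b)|)|a-b|$ furnished by the mean value theorem together with the structural condition on $F'$, and a maximal-function argument to dispose of the translated factor $|F'(u(x-y))|$. The second, harder estimate is obtained exactly as you describe, by running one more mean value theorem and invoking the corresponding condition on $F''$.

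Two points to keep straight if you were to fill in the details. First, the Strichartz-type characterization $\norm{\Lambda^\alpha g}_{L^p}\sim\norm{\mathcal D_\alpha g}_{L^p}$ used in the KPV appendix carries the lower restriction $p>2d/(d+2\alpha)$; one must either arrange the H\"older splitting so that every exponent that meets $\mathcal D_\alpha$ lies in that range, or work with the full Triebel--Lizorkin difference characterization you wrote, which is valid for all $1<p<\infty$ when $0<\alpha<1$. Second, your description of the translated term compresses the bookkeeping: the standard route is to Cauchy--Schwarz the ball average of $F'(u(\cdot-y))$ against that of the difference quotient of $u$ at each dyadic scale, dominate the former by $M(|F'(u)|^r)^{1/r}$ for some $r<p_1$ (so the power maximal operator stays bounded on $L^{p_1}$), and only then resum; the Fefferman--Stein vector-valued inequality enters on the difference-quotient side. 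You have the right ingredients and you correctly identify this term as the genuine obstruction, but a complete write-up has to make this chain explicit. As a minor aside, the statement's final line has a typo in the paper --- $\Lambda$ should be $(-\Delta_{\R^d})^{1/2}$, not $(-\Delta_{\R^3})^{1/2}$, to match the ambient $\R^d$.
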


Next, we recall the dispersive estimates.


\begin{lemma}[Dispersive estimate,\cite{IL-2017-CRM}]\label{disp-est}
\beq
\norm{e^{it\Delta_\Omega}f }_{L^\infty_x(\Omega)} \les  t^{-\frac{3}{2} }  \norm{f}_{L_x^1(\Omega)}.
\eeq
\end{lemma}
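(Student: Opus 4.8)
The estimate is due to Ivanovici--Lebeau \cite{IL-2017-CRM}; I outline the strategy one would follow to establish it. The plan is to reduce the global bound on $\Omega$ to a model problem near $\pa\Omega$, build an explicit parametrix for $e^{it\DO}$ there out of Airy functions, and extract the decay by stationary phase. First I would split the propagator into a piece supported (microlocally) along rays that stay away from the obstacle and a piece concentrated in a collar neighborhood of $\pa\Omega$. For the first piece, the Schr\"odinger flow is governed by the same bicharacteristics as in free space, so a standard geometric optics parametrix together with the Euclidean dispersive bound $\norm{e^{it\DR}f}_{L^\infty}\les t^{-3/2}\norm{f}_{L^1}$ gives the claim; the whole difficulty lies in the boundary piece.

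Near $\pa\Omega$ one introduces Friedlander-type coordinates, in which $-\DO$ takes the normal form $-\pa_x^2-(1+x)\Delta_y$ on the half-space $\{x>0\}$, $y\in\R^2$, with $\pa\Omega=\{x=0\}$. Taking a partial Fourier transform in the tangential variable $y$ reduces the spectral problem to the one-dimensional operator $-\pa_x^2+(1+x)|\eta|^2$ with a Dirichlet condition at $x=0$; after rescaling, its generalized eigenfunctions are translated Airy functions $\mathrm{Ai}(\cdot)$, with the admissible spectral parameters quantized by the (negative) zeros of $\mathrm{Ai}$. Feeding this spectral resolution into $e^{it\DO}$ produces a representation of the kernel as a superposition of oscillatory integrals, one family of terms for each branch of reflected/gliding ray that has grazed the boundary up to time $t$. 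On each term I would run stationary phase in the remaining variables, using the large-argument asymptotics of $\mathrm{Ai}$ in the illuminated (hyperbolic) regime and the genuine Airy transition expansion near the glancing set, to read off the $t^{-3/2}$ decay.

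The main obstacle is to sum the contributions of all the reflected/diffracted branches \emph{uniformly in $t$}: one must show the series over the number of grazing reflections converges with a constant independent of $t$. This is precisely the point at which the dimension matters --- in $\R^3$ the relevant sum is summable and delivers $t^{-3/2}$, whereas for $d\ge 4$ the analogous sum diverges and the naive dispersive estimate fails, which is the content of the counterexamples in \cite{IL-2017-CRM}. The technically most delicate subcase is the transition (Airy) zone, where the phase in the oscillatory integral degenerates and ordinary stationary phase no longer applies, so one must instead exploit the intrinsic oscillatory decay of $\mathrm{Ai}$ and of its derivatives, with careful uniform control of the spacing of the Airy zeros. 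Once these uniform bounds are in place, reassembling the boundary and interior pieces yields the stated estimate.
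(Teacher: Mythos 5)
The paper itself offers no proof of this lemma: it is imported verbatim from Ivanovici--Lebeau \cite{IL-2017-CRM} and used as a black box throughout (it is the only genuinely new ingredient, beyond the Dodson--Murphy scheme, that allows the scattering criterion of Proposition \ref{sct-cri} to go through on $\Omega$). So there is no in-paper argument for you to match; the only question is whether your sketch is a faithful account of the cited reference, and on the whole it is. The reduction to a collar neighborhood, the Friedlander model $-\pa_x^2-(1+x)\Delta_y$ on the half-space, the passage to translated Airy functions with Dirichlet spectrum at the zeros of $\mathrm{Ai}$, the expansion of the kernel into oscillatory integrals indexed by the number of grazing reflections, and stationary phase in the hyperbolic and transition zones --- that is exactly the Ivanovici--Lebeau architecture (built on the Melrose--Taylor parametrix).

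One small caution about your last paragraph: you attribute the dimensional restriction to a divergence of the sum over reflected branches when $d\ge4$. The failure in higher dimensions in \cite{IL-2017-CRM} is established by an explicit counterexample --- a wave packet whose successive reflections off the strictly convex boundary refocus at a caustic, producing an $L^\infty$ mass larger than the free dispersive bound allows, with the amplification growing with the number of tangential directions. It is not merely that the naive estimate of each term sums badly; the estimate itself is genuinely false. The two phenomena are of course linked, but it is worth stating the counterexample mechanism correctly, especially since the present paper's own Remark after Theorem \ref{main-thm} flags exactly this obstruction to generalizing to $d\ge4$.
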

Combining this  with the endpoint Strichartz estimate  of Keel-Tao, we have the following Strichartz estimates:
\begin{proposition}[Strichartz estimates \cite{Ivanovici-2010-Apde}\cite{keel1998endpoint}]
 Let   $q,\tq\geq2$, and
 $2\leq r,\tr\leq \infty$ satisfying
 $\frac{3}{2}=\frac2q+\frac{3}{r}=\frac{2}{\tq}+\frac{3}{\tr}$.
 Then,  the solution $u$ to $(i\partial_t+\Delta)u = F$
 on an interval $I\ni 0$  satisfies
 \begin{equation}\label{equ:stri}
   \|u\|_{L^q_tL^r_x(I\times \Omega)}\les
 \norm{u_0}_{L^2(\Omega)} +
 \norm{F}_{ L^{\tq'}_tL^{\tr'}_x(I\times \Omega)}.
 \end{equation}

\end{proposition}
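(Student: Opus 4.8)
The plan is to deduce the whole family of Strichartz estimates, endpoint included, from the abstract bilinear machinery of Keel and Tao \cite{keel1998endpoint}, whose only analytic inputs are a uniform $L^2$–bound and an $L^1\to L^\infty$ dispersive bound for the propagator; both are at our disposal. First I would record the two "endpoint" mapping properties for $U(t):=e^{it\DO}$. Since $U(t)$ is unitary on $L^2(\Omega)$ (as noted after the spectral construction), $\norm{U(t)f}_{L^2_x(\Omega)}=\norm{f}_{L^2_x(\Omega)}$, and Lemma \ref{disp-est} gives $\norm{U(t)f}_{L^\infty_x(\Omega)}\les |t|^{-3/2}\norm{f}_{L^1_x(\Omega)}$. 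Because $e^{it\DO}$ is a one-parameter group, $U(t)U(s)^\ast=e^{i(t-s)\DO}$, so these are precisely the hypotheses of the Keel--Tao theorem with decay exponent $\sigma=3/2$.

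Next I would interpolate the two end estimates by Riesz--Thorin to obtain, for every $2\le r\le\infty$, the estimate $\norm{U(t)f}_{L^r_x(\Omega)}\les |t|^{-3(\frac12-\frac1r)}\norm{f}_{L^{r'}_x(\Omega)}$, and then feed this into the $TT^\ast$ argument together with the bilinear interpolation lemma of \cite{keel1998endpoint}. A pair $(q,r)$ is $\sigma$–admissible for $\sigma=3/2$ exactly when $q,r\ge2$ and $\frac1q+\frac{3}{2r}=\frac34$, i.e. $\frac2q+\frac3r=\frac32$; the single pair the abstract theorem must exclude is $(2,\infty)$ at decay exponent $1$, which does not arise here since $\sigma=3/2\ne1$, so in particular the endpoint $(q,r)=(2,6)$ is permitted. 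This produces the homogeneous estimate $\norm{U(t)u_0}_{L^q_tL^r_x(\R\times\Omega)}\les\norm{u_0}_{L^2(\Omega)}$, its dual, and the retarded estimate $\bigl\|\int_{s<t}U(t-s)F(s)\,ds\bigr\|_{L^q_tL^r_x}\les\norm{F}_{L^{\tq'}_tL^{\tr'}_x}$ for all admissible $(q,r)$ and $(\tq,\tr)$; here the double-endpoint case $(q,r)=(\tq,\tr)=(2,6)$, where the Christ--Kiselev lemma is unavailable, is handled directly by the bilinear form estimate of \cite{keel1998endpoint}, while for non-endpoint pairs one could instead simply cite the Strichartz estimates of Ivanovici \cite{Ivanovici-2010-Apde}.

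Finally, writing Duhamel's formula for the inhomogeneous problem $(i\pa_t+\Delta)u=F$ on the slab $I\ni0$, namely $u(t)=e^{it\DO}u_0-i\int_0^t e^{i(t-s)\DO}F(s)\,ds$, restricting the time integrations to $I$, and adding the homogeneous bound for the first term to the retarded bound for the second yields \eqref{equ:stri}. I expect the only genuinely delicate point to be the inhomogeneous estimate at the double endpoint $(2,6)$; everything else is a verbatim transcription of the Keel--Tao framework, which is insensitive to the underlying geometry, once the dispersive estimate of \cite{IL-2017-CRM} is available.
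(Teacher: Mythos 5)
Your proposal is correct and matches the paper's (implicit) approach: the paper simply states that the Strichartz estimates follow by combining the dispersive estimate of Ivanovici--Lebeau (Lemma \ref{disp-est}) with the Keel--Tao abstract Strichartz theorem, which is precisely what you carry out, with the right identification $\sigma=3/2$, the correct admissibility line $\frac2q+\frac3r=\frac32$, and the correct observation that the forbidden endpoint $(2,\infty)$ arises only at $\sigma=1$ and so does not obstruct $(2,6)$ here.
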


%

We define the  $S(I)$ and $W(I)$ norm for a interval $I$ by
\beq
\norm{u}_{S(I)}=\norm{u}_{L^5_{t,x}(I\times \Omega )}   \quad \text{ and } \quad
\norm{u}_{W(I)}=\norm{u}_{L^5_t L^{\frac{30}{11}}_x(I\times \Omega )}.
\eeq
Note that $1< \min\{1+\frac{11}{30},\frac{11}{10}\}$.
Thus, by Strichartz, Corollary \ref{fcr-KVZ}, Proposition \ref{fcr-KPV}, we have: 

\begin{theorem}[Local well-posedness, \cite{CW-1990-Nonlinear}\cite{KM-2006-Invent}]
  Assume that $u_0\in \dot H_D^{1}(\Omega), 0\in I ,$ and $\norm{u_0}_{ \dot H_D^{1}(\Omega)} \leq A.$
  Then there exists $\delta=\delta(A)$ such that if $\norm{e^{it\DO} u_0}_{S(I)}\leq \delta, $
   there exists a unique solution $u$ to \eqref{nls-ex-do} in $I\times \Omega$, with $u\in C(I; \dot H_D^{1}(\Omega) )$ such that
  \beq
\Big\|(-\DO)^{\frac12} u\Big\|_{W(I) } +\sup_{t\in I}\norm{u(t)}_{\dot H_D^{1}(\Omega)} \leq CA,\quad  \norm{u}_{S(I)}\leq 2\delta.
  \eeq
  Moreover, if $u_{0,k}\ra u_0 $ in $\dot H_D^{1}(\Omega)$, we obtain the corresponding solutions
  $u_k\ra u$ in $C(I; \dot H^{1}_D(\Omega))$.

\end{theorem}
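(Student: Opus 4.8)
The plan is to prove the local well-posedness theorem by a standard contraction mapping (fixed-point) argument applied to the Duhamel formulation, adapted to the exterior-domain setting by replacing all the harmonic-analysis inputs with their domain analogues established earlier in the paper. Concretely, I would work in the complete metric space
\[
X = \Big\{ u : \big\|(-\DO)^{\frac12}u\big\|_{W(I)} + \sup_{t\in I}\|u(t)\|_{\dot H^1_D(\Omega)} \le CA,\ \|u\|_{S(I)} \le 2\delta \Big\},
\]
with metric $d(u,v) = \|u-v\|_{S(I)} + \|(-\DO)^{\frac12}(u-v)\|_{W(I)}$, and consider the map
\[
\Phi(u)(t) = e^{it\DO}u_0 + i\int_0^t e^{i(t-s)\DO} F(u(s))\,ds, \qquad F(u) = |u|^2 u.
\]
First I would record the relevant dual exponent relations: $S(I)$ pairs with $L^{5/4}_t L^{30/19}_x$ and $W(I) = L^5_t L^{30/11}_x$ is a Strichartz-admissible pair (since $\tfrac{2}{5} + \tfrac{3}{30/11} = \tfrac{2}{5} + \tfrac{11}{10} = \tfrac{3}{2}$), while its dual is $L^{5/4}_t L^{30/19}_x$. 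Then the key nonlinear estimate is: apply $(-\DO)^{1/2}$ to $F(u)$ and use the fractional chain rule (Corollary~\ref{fcr-KVZ}) with $G(z) = |z|^2 z$, $G'(z) \sim |z|^2$, splitting $\tfrac{19}{30} = \tfrac{2}{5} + \tfrac{7}{30}$ so that $\|(-\DO)^{1/2}F(u)\|_{L^{30/19}_x} \lesssim \|u\|_{L^5_x}^2 \|(-\DO)^{1/2}u\|_{L^{30/11}_x}$; integrating in time with Hölder in $t$ ($\tfrac{4}{5} = \tfrac{2}{5}\cdot 2 + \tfrac{1}{5}$, spreading the two $\|u\|_{L^5_x}$ factors into $S(I)$ and the derivative factor into $W(I)$) gives
\[
\big\|(-\DO)^{1/2}F(u)\big\|_{L^{5/4}_t L^{30/19}_x(I\times\Omega)} \lesssim \|u\|_{S(I)}^2 \big\|(-\DO)^{1/2}u\big\|_{W(I)}.
\]
Feeding this into the Strichartz estimate \eqref{equ:stri} (once with the $W(I)$ output norm, once with the $\dot H^1_D$-energy output norm, using the equivalence of Sobolev norms from Proposition~\ref{eq-Sobolev} to move $(-\DO)^{1/2}$ past the nonlinearity) yields $\|(-\DO)^{1/2}\Phi(u)\|_{W(I)} + \sup_t\|\Phi(u)(t)\|_{\dot H^1_D} \lesssim A + \delta^2 CA$, and similarly $\|\Phi(u)\|_{S(I)} \le \|e^{it\DO}u_0\|_{S(I)} + C\delta^2 CA \le \delta + C\delta^2 A$. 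Choosing $\delta = \delta(A)$ small enough makes $\Phi$ map $X$ into itself; an essentially identical computation (using the difference form of the chain rule, or just $F(u)-F(v) = O(|u|^2 + |v|^2)|u-v|$ together with the product rule) gives $d(\Phi(u),\Phi(v)) \le \tfrac12 d(u,v)$, so $\Phi$ is a contraction and has a unique fixed point $u \in X$, which is the desired solution; continuity in $t$ with values in $\dot H^1_D(\Omega)$ follows from the Strichartz/Duhamel representation. The stability/continuous-dependence statement ($u_{0,k}\to u_0$ in $\dot H^1_D \Rightarrow u_k\to u$ in $C(I;\dot H^1_D)$) is obtained by the same difference estimates: write the equation for $u_k - u$, bound $\|u_k - u\|_{S(I)} + \|(-\DO)^{1/2}(u_k-u)\|_{W(I)} \lesssim \|u_{0,k}-u_0\|_{\dot H^1_D} + \delta^2(\cdots)$ and absorb, first on a possibly smaller interval where the free evolutions are uniformly small, then iterate to cover $I$.

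The main obstacle is purely the harmonic analysis on $\Omega$: on a general domain one has neither the Leibniz rule nor the fractional chain rule for free, and $(-\DO)^{1/2}$ does not commute with multiplication in any naive sense. This is precisely why the paper first invokes Proposition~\ref{eq-Sobolev} (equivalence of $\dot H^{s,p}_D(\Omega)$ with the Euclidean norm for the restricted range $0 \le s < \min\{1+\tfrac1p, \tfrac3p\}$) and Corollaries for the fractional product and chain rules — these are the substitutes that make the above estimates legitimate, and one must check that the exponents $(5/4, 30/19, 30/11)$ with $s = 1/2$ lie in the admissible ranges (they do, since $\tfrac12 < \min\{1+\tfrac{11}{30}, \tfrac{11}{10}\}$, exactly the inequality flagged before the theorem). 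A secondary technical point is handling the endpoint Strichartz pair: the admissible pair $(5, 30/11)$ is not the endpoint, but the scheme implicitly relies on the full Keel–Tao range being available for $e^{it\DO}$, which is guaranteed by combining the dispersive estimate of Lemma~\ref{disp-est} with the abstract endpoint Strichartz machinery as stated in the Strichartz proposition. Once these ingredients are in place the argument is the classical Cazenave–Weissler / Kenig–Merle fixed-point scheme, so I would present it compactly, emphasizing the one nonlinear estimate above and referring to the standard references for the routine contraction and continuity bookkeeping.
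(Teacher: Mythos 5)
Your overall scheme — Duhamel, contraction mapping in a ball defined by the $S$ and $W$ norms, transferring the Euclidean fractional calculus to $\Omega$ via Proposition~\ref{eq-Sobolev} and Corollary~\ref{fcr-KVZ}, and Proposition~\ref{fcr-KPV} for the difference estimates — is exactly what the paper has in mind (it gives no proof, only the remark ``by Strichartz, Corollary~\ref{fcr-KVZ}, Proposition~\ref{fcr-KPV}''), and the note preceding the theorem, $1<\min\{1+\tfrac{11}{30},\tfrac{11}{10}\}$, confirms that the chain rule is to be applied with $s=1$ and $p_2=\tfrac{30}{11}$, as you intend. However, your exponent bookkeeping in the key nonlinear estimate does not close. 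You use the dual pair $\big(\tfrac{5}{4},\tfrac{30}{19}\big)$, and your claimed split $\tfrac{19}{30}=\tfrac{2}{5}+\tfrac{7}{30}$ forces $p_2=\tfrac{30}{7}$, not $\tfrac{30}{11}$; conversely $p_1=\tfrac52$, $p_2=\tfrac{30}{11}$ gives $\tfrac{1}{p}=\tfrac{2}{5}+\tfrac{11}{30}=\tfrac{23}{30}$, not $\tfrac{19}{30}$. Your time–Hölder identity ``$\tfrac45=\tfrac{2}{5}\cdot 2+\tfrac15$'' is also false (it sums to $1$), and in fact placing all three factors in $L^5_t$ yields $\tfrac15+\tfrac15+\tfrac15=\tfrac35$, not $\tfrac45$.

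The correct dual Strichartz pair is $\big(\tfrac53,\tfrac{30}{23}\big)$, dual to the admissible pair $\big(\tfrac52,\tfrac{30}{7}\big)$ (check: $\tfrac{2}{5/2}+\tfrac{3}{30/7}=\tfrac45+\tfrac{7}{10}=\tfrac32$). Then
\beqq
\big\|(-\DO)^{1/2}(|u|^2u)\big\|_{L^{5/3}_t L^{30/23}_x(I\times\Omega)}
\lesssim
\|u\|_{L^5_{t,x}(I\times\Omega)}^2\,
\big\|(-\DO)^{1/2}u\big\|_{L^5_tL^{30/11}_x(I\times\Omega)}
= \|u\|_{S(I)}^2\,\big\|(-\DO)^{1/2}u\big\|_{W(I)},
\eeqq
the spatial Hölder being $\tfrac{23}{30}=\tfrac25+\tfrac{11}{30}$ and the temporal Hölder $\tfrac35=\tfrac15+\tfrac15+\tfrac15$, both of which are consistent with the fractional chain rule constraint since $1<\min\{1+\tfrac{11}{30},\tfrac{33}{30}\}=\tfrac{11}{10}$. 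With this repaired exponent the remainder of your contraction, continuity, and stability argument goes through unchanged.
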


\begin{remark}
From standard arguments, we have
 if  $u$ is a global solution and such that
  \beqq
  \norm{u}_{S(\R)} <\infty,
  \eeqq
then  $u$  scatters both directions.

\end{remark}

%
%
%
%
%
%
%

We need the following refined Gagliardo-Nirenberg inequality, which follows from
the sharp Gagliardo-Nirenberg inequality and the
Pohozaev identities of the ground state.

\begin{lemma}[Refined Gagliardo-Nirenberg inequality, \cite{DM-2017-np-nonradial}]\label{Sharp-Sobolev}
For $f\in H^1(\R^3)$ and any $\xi\in \R^3$,
\beq
\norm{f}_{L^4(\R^3)}^4 \leq \frac{4}{3} \Big(  \tfrac{\norm{f}_{L^2(\R^3)} \norm{f}_{\dot H^1_x(\R^3) }}{\norm{Q}_{L^2(\R^3)} \norm{ Q}_{\dot H^1_x(\R^3)}}\Big)~ \inf_{\xi\in\R^3}  \norm{e^{ix\xi} f }_{\dot H^1_x(\R^3)}^2.
\eeq
\end{lemma}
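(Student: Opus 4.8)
The plan is to derive this refined Gagliardo--Nirenberg inequality from the sharp Gagliardo--Nirenberg inequality by exploiting the Galilean invariance of the $\dot H^1$ norm modulo momentum. Recall the sharp Gagliardo--Nirenberg inequality in $\R^3$ for the cubic power, which (by Weinstein's work and the characterization of the optimizer by the ground state $Q$) reads
\beqq
\norm{f}_{L^4(\R^3)}^4 \leq C_{GN}\, \norm{f}_{L^2(\R^3)}\,\norm{\nabla f}_{L^2(\R^3)}^3,
\eeqq
with the optimal constant $C_{GN} = \tfrac{4}{3}\,\norm{Q}_{L^2}^{-2}\,\norm{\nabla Q}_{L^2}^{-2}$; this identity for $C_{GN}$ is exactly the Pohozaev identity content, obtained by testing \eqref{ellip-eq} against $Q$ and against $x\cdot\nabla Q$.

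The key algebraic observation is that for any $\xi\in\R^3$ the modulated function $e^{ix\xi} f$ has the same $L^2$ norm and the same $L^4$ norm as $f$, while $\norm{\nabla(e^{ix\xi}f)}_{L^2}^2 = \norm{\nabla f}_{L^2}^2 + 2\xi\cdot\,\mathrm{Im}\!\int \bar f\nabla f\,dx + |\xi|^2\norm{f}_{L^2}^2$, which is minimized over $\xi$ at the value $\norm{\nabla f}_{L^2}^2 - |P(f)|^2/\norm{f}_{L^2}^2$ where $P(f)$ is the momentum; in particular $\inf_\xi \norm{\nabla(e^{ix\xi}f)}_{L^2}^2 \le \norm{\nabla f}_{L^2}^2$. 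So first I would apply the sharp inequality to $e^{ix\xi} f$ in place of $f$, giving
\beqq
\norm{f}_{L^4(\R^3)}^4 \leq C_{GN}\, \norm{f}_{L^2(\R^3)}\,\norm{\nabla(e^{ix\xi}f)}_{L^2(\R^3)}^3
\eeqq
for every $\xi$. Then I would split one power of $\norm{\nabla(e^{ix\xi}f)}_{L^2}$ off and bound it crudely by $\norm{\nabla f}_{L^2}$ (using the infimum bound above), leaving
\beqq
\norm{f}_{L^4(\R^3)}^4 \leq C_{GN}\, \norm{f}_{L^2(\R^3)}\,\norm{\nabla f}_{L^2(\R^3)}\,\norm{\nabla(e^{ix\xi}f)}_{L^2(\R^3)}^2,
\eeqq
and finally take the infimum over $\xi$ in the last factor and substitute the formula for $C_{GN}$ to arrive at the claimed form $\tfrac{4}{3}\big(\tfrac{\norm{f}_{L^2}\norm{f}_{\dot H^1}}{\norm{Q}_{L^2}\norm{Q}_{\dot H^1}}\big)\inf_\xi\norm{e^{ix\xi}f}_{\dot H^1}^2$.

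The only real subtlety — and the step I expect to need the most care — is justifying the value of the sharp constant $C_{GN}$ in terms of $Q$, i.e. combining the Euler--Lagrange equation \eqref{ellip-eq} for the optimizer with the two Pohozaev identities to express $C_{GN}$ as $\tfrac43\norm{Q}_{L^2}^{-2}\norm{\nabla Q}_{L^2}^{-2}$; everything else is the elementary Galilean computation and a term-dropping estimate. Since the statement attributes the lemma to \cite{DM-2017-np-nonradial}, I would either quote the sharp Gagliardo--Nirenberg inequality with its known optimal constant directly or reproduce the short Pohozaev computation, and then the argument above finishes in a few lines.
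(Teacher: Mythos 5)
Your strategy is the right one, and it is exactly what the paper alludes to (and what \cite{DM-2017-np-nonradial} actually does): apply the sharp Gagliardo--Nirenberg inequality to $e^{ix\xi}f$, exploit the Galilean invariance of the $L^2$ and $L^4$ norms, and use the Pohozaev identities of the ground state to pin down the sharp constant. The paper itself gives no proof, so there is nothing more specific to compare against.

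Two points deserve fixing, one substantive, one cosmetic. First, the value you quote for the sharp constant is off by a power. From $-\Delta Q + Q = Q^3$ and the Pohozaev identity one gets $\|\nabla Q\|_{L^2}^2 = 3\|Q\|_{L^2}^2$ and $\|Q\|_{L^4}^4 = \tfrac43\|\nabla Q\|_{L^2}^2$, hence
\begin{equation*}
C_{GN} = \frac{\|Q\|_{L^4}^4}{\|Q\|_{L^2}\|\nabla Q\|_{L^2}^3} = \frac{4}{3\,\|Q\|_{L^2}\,\|\nabla Q\|_{L^2}},
\end{equation*}
i.e.\ exponent $-1$ on each factor, not $-2$ as you wrote. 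With your stated $C_{GN}$ the final substitution would produce an extra factor of $(\|Q\|_{L^2}\|\nabla Q\|_{L^2})^{-1}$ and would not match the lemma. Second, the step where you ``bound one power crudely by $\|\nabla f\|_{L^2}$ and then take the infimum over $\xi$'' is phrased as though the intermediate inequality $\|f\|_{L^4}^4 \le C_{GN}\|f\|_{L^2}\|\nabla f\|_{L^2}\|\nabla(e^{ix\xi}f)\|_{L^2}^2$ held for all $\xi$; it does not, since $\|\nabla(e^{ix\xi}f)\|_{L^2}$ can exceed $\|\nabla f\|_{L^2}$. The clean way to organize it is to first choose $\xi = \xi_0$ to be the minimizer of $\xi \mapsto \|\nabla(e^{ix\xi}f)\|_{L^2}$, apply the sharp inequality to $e^{ix\xi_0}f$, and only then use $\|\nabla(e^{ix\xi_0}f)\|_{L^2} \le \|\nabla f\|_{L^2}$ (which holds by minimality, taking $\xi=0$); the remaining squared factor is then already the infimum. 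With these two repairs the argument is complete and correct.
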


%

Before the end of this section, we recall the coercivity property for functions
 under the ground state $Q$(i.e., satisfying the conditions \eqref{A} and \eqref{B}).
We denote $M_{\R^3}$ and $E_{\R^3}$ as the Mass and energy on $\R^3$ respectively.

\begin{lemma}[Coercivity]\label{Coercivity}
Let $u_0\in H^1_D(\Omega)$ satisfy the conditions \eqref{equ:energy}.
If $\norm{u_0}_{L^2(\Omega)} \norm{u_0}_{\dot H^1_D(\Omega)}\leq \norm{Q}_{L^2_x(\R^3)} \norm{Q}_{\dot H^1_x(\R^3)}$, then there exists $\delta'=\delta'(\delta)>0$ so that
 \beq\label{LH-below}
\norm{u(t)}_{L^2_x(\Omega)} \norm{u(t)}_{\dot H^1_x(\Omega)}\leq (1-\delta')\norm{Q}_{L^2_x(\R^3)} \norm{Q}_{\dot H^1_x(\R^3)}
 \eeq
  holds for all $t\in I,$ where $u:I \times \Omega\ra \mathbb{C}$ is the maximal lifespan solution to \eqref{nls-ex-do}. In particular, $I=R$ and $u$ is uniformly bounded in $H^1(\Omega).$

 Moreover, for any  function $f\in H^1_D(\Omega)$  such that \eqref{LH-below}, there exists $\rho=\rho(\delta')>0$ such that
  \beq
\norm{f}_{\dot H^1_x(\Omega)}^2-\frac34\norm{f}^4_{L^4_x(\Omega)}  \geq \rho ( \norm{f}_{\dot H^1_x(\Omega)}^2 +\norm{f}_{L^4_x(\Omega)}^4 ) .
 \eeq
\end{lemma}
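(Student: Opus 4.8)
The plan is to deduce the coercivity statement \eqref{LH-below} from the corresponding Euclidean fact together with the conservation laws, and then to obtain the functional inequality for a fixed $f$ from the sharp Gagliardo–Nirenberg inequality. The key observation that makes the transfer to the domain $\Omega$ painless is that the quantities appearing in \eqref{equ:energy}, \eqref{equ:kinerng}, and \eqref{LH-below} only involve $\|\nabla u\|_{L^2(\Omega)}$, $\|u\|_{L^2(\Omega)}$, and $\|u\|_{L^4(\Omega)}$; by extending a function in $H^1_D(\Omega)=H^1_0(\Omega)$ by zero to all of $\R^3$ one lands in $H^1(\R^3)$ with the same values of all three norms, so the sharp Gagliardo–Nirenberg inequality on $\R^3$ and the Pohozaev identities for $Q$ apply verbatim. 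First I would record this zero-extension remark and fix the standard normalizations $E_{\R^3}(Q)$, $M_{\R^3}(Q)$, together with the sharp constant in $\|f\|_{L^4}^4\le C_{GN}\|f\|_{L^2}\|\nabla f\|_{L^2}^3$, which is attained by $Q$.

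Next I would run the usual variational/continuity argument. Writing $\mathcal K(t):=\|u(t)\|_{L^2(\Omega)}\|u(t)\|_{\dot H^1_D(\Omega)}$ and $\mathcal K_Q:=\|Q\|_{L^2(\R^3)}\|Q\|_{\dot H^1(\R^3)}$, the product $E_\Omega(u_0)M_\Omega(u_0)$ is conserved and, by the sharp Gagliardo–Nirenberg inequality applied to the zero-extension of $u(t)$, one gets a lower bound for $E_\Omega(u(t))M_\Omega(u(t))$ as a function of the ratio $\mathcal K(t)/\mathcal K_Q$ — concretely of the shape $\tfrac12 x^2(1-\tfrac12 x^2)\cdot(\text{something})$ after the rescaling $x=\mathcal K(t)/\mathcal K_Q$, which is a concave parabola in $x^2$ vanishing at $x=0$ and maximized at $x=1$ with value $E_{\R^3}(Q)M_{\R^3}(Q)$. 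Hypothesis \eqref{equ:energy} says the conserved value lies strictly below this maximum, so $x^2$ stays in one of the two components of the sublevel set; hypothesis \eqref{equ:kinerng} forces $\mathcal K(0)<\mathcal K_Q$, i.e.\ the component $\{x<1\}$, and since $t\mapsto\mathcal K(t)$ is continuous on the maximal interval $I$ and cannot cross $x=1$ (where the value would meet the maximum), it stays trapped in $\{x\le 1-\delta'\}$ for a $\delta'$ depending only on the gap in \eqref{equ:energy} (hence on $\delta$). This gives \eqref{LH-below} on all of $I$; the uniform $H^1(\Omega)$ bound that follows then feeds the local theory to rule out finite-time blow-up, so $I=\R$. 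The only mild subtlety here is extracting $\delta'$ quantitatively from the strict inequality and checking it is uniform in $t$ — standard but worth stating carefully.

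For the last assertion, fix $f\in H^1_D(\Omega)$ with $\mathcal K(f):=\|f\|_{\dot H^1_D(\Omega)}\|f\|_{L^2(\Omega)}\le(1-\delta')\mathcal K_Q$. Extending by zero and using the sharp Gagliardo–Nirenberg inequality, $\tfrac34\|f\|_{L^4(\Omega)}^4\le \tfrac34 C_{GN}\|f\|_{L^2(\Omega)}\|f\|_{\dot H^1(\Omega)}^3$, and the Pohozaev relations pin down $\tfrac34 C_{GN}$ so that the right side equals $\bigl(\mathcal K(f)/\mathcal K_Q\bigr)^{?}\cdot(\dots)$; in any case it is bounded by $(1-\delta')^{2}\|f\|_{\dot H^1(\Omega)}^2$ after using $\mathcal K(f)\le(1-\delta')\mathcal K_Q$ and reinserting the definition of $C_{GN}$ via $Q$. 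Hence $\|f\|_{\dot H^1(\Omega)}^2-\tfrac34\|f\|_{L^4(\Omega)}^4\ge\bigl(1-(1-\delta')^{2}\bigr)\|f\|_{\dot H^1(\Omega)}^2=:2\rho_0\|f\|_{\dot H^1(\Omega)}^2$. To upgrade this to control of $\|f\|_{\dot H^1}^2+\|f\|_{L^4}^4$ on the right, split: keep half of the gain, $\rho_0\|f\|_{\dot H^1}^2$, and use the remaining $\rho_0\|f\|_{\dot H^1}^2\ge\tfrac34\|f\|_{L^4}^4-(\text{gain already used})$ — more cleanly, since $\tfrac34\|f\|_{L^4}^4\le(1-\delta')^2\|f\|_{\dot H^1}^2$, we also have $\|f\|_{L^4}^4\lesssim\|f\|_{\dot H^1}^2$, so $\|f\|_{\dot H^1}^2+\|f\|_{L^4}^4\lesssim\|f\|_{\dot H^1}^2$, and choosing $\rho=\rho(\delta')>0$ small enough absorbs the constant. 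I expect no real obstacle in this last step; the one place to be attentive is to track the exact power of $\mathcal K(f)/\mathcal K_Q$ produced by the Pohozaev identities so that the coefficient $1-(1-\delta')^2$ is genuinely positive and depends only on $\delta'$.
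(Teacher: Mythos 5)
Your argument is correct and is essentially the same route the paper takes: the paper simply cites Proposition~\ref{eq-Sobolev} together with Lemmas~2.3 and 2.4 of Dodson--Murphy~\cite{DM-2017-PAMS}, and your write-up spells out exactly what those lemmas do. The zero-extension remark is precisely the right mechanism here — it transfers the $L^2$, $\dot H^1$, and $L^4$ norms from $\Omega$ to $\R^3$ with equalities rather than mere equivalences, which is what lets the \emph{sharp} Gagliardo--Nirenberg constant (and hence the comparison with $Q$) carry over verbatim.
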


\begin{proof}
   The proof follows from Proposition  \ref{eq-Sobolev} above, Lemma 2.3 and Lemma 2.4 in \cite{DM-2017-PAMS}.

\end{proof}

\begin{remark}\label{R:coercive} Suppose $u_0\in H^1_D(\Omega)$ satisfies \eqref{equ:energy} and \eqref{equ:kinerng}.  Then by the above lemma, the maximal-lifespan solution $u$ to \eqref{nls-ex-do} with initial data $u_0$ obeys \beq\label{LH-below}
\norm{u(t)}_{L^2_x(\Omega)} \norm{u(t)}_{\dot H^1_x(\Omega)}\leq (1-\delta')\norm{Q}_{L^2_x(\R^3)} \norm{Q}_{\dot H^1_x(\R^3)}
 \eeq for all $t$ in the lifespan of $u$.  In particular, $u$ remains bounded in $H_D^1(\Omega)$ and hence is global.
\end{remark}

\section{Scattering criterion}
In this section, we prove a scattering criterion for solutions of
  the Cauchy problem \eqref{nls-ex-do}.

\begin{proposition}\label{sct-cri}
Suppose that $u$ is a global solution to \eqref{nls-ex-do}, satisfying
\beq
\norm{u}_{L^\infty_tH^1_D(\R\times \Omega) } \leq E.
\eeq
There exist $\ep=\ep(E,\Omega)>0$ and $T_0=T_0(\ep,E,\Omega)>0$
satisfying that  if for any $a\in\R$ there exists $T\in\R$ such that
$[T-\ep^{-5}, T ]  \subset (a,a+T_0)$ and
\beq\label{ST-decay}
\norm{u}_{L^5_{t,x} ([T-\ep^{-5},T)\times \Omega )}  ~\leq~ \ep,
\eeq
then $u$ scatters forward in time.
\end{proposition}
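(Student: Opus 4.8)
The plan is to produce a time $T$ at which the linear evolution $e^{i(\cdot-T)\DO}u(T)$ has $S([T,\infty))$-norm below the small-data threshold $\delta=\delta(E)$ of the local well-posedness theorem; once this is in hand, that theorem applied with $A=E$ (using $\norm{u(T)}_{\dot H^1_D}\le E$), together with uniqueness, forces $\norm{u}_{S([T,\infty))}<\infty$, whence $u$ scatters forward by the scattering remark. Write $u_0:=u(0)$. To select $T$ I would first observe that $\norm{e^{it\DO}u_0}_{S(\R)}\les\norm{u_0}_{\dot H^{1/2}_D}<\infty$, by the Sobolev embedding $\dot H^{1/2,30/11}_D(\Omega)\hookrightarrow L^5(\Omega)$ and the Strichartz estimate, so dominated convergence yields $T_1=T_1(u_0)$ with $\norm{e^{it\DO}u_0}_{S([T_1,\infty)\times\Omega)}\le\tfrac13\delta(E)$; then I would apply the hypothesis with $a=T_1$ to get $T\ge T_1$ with $\norm{u}_{L^5_{t,x}([T-\ep^{-5},T)\times\Omega)}\le\ep$. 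It is precisely to make this possible that the hypothesis must hold for every window: the smallness has to be available past the datum-dependent time $T_1$.

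Next, with $\tau=\ep^{-5}$ and $J=[T-\tau,T)$, I would split the Duhamel formula based at $t=0$,
\beqq
e^{i(t-T)\DO}u(T)=e^{it\DO}u_0+\underbrace{i\int_0^{T-\tau}e^{i(t-s)\DO}F(u(s))\,ds}_{\mathcal{D}_{\mathrm{far}}}+\underbrace{i\int_{T-\tau}^{T}e^{i(t-s)\DO}F(u(s))\,ds}_{\mathcal{D}_{\mathrm{near}}},
\eeqq
so that on $[T,\infty)$ the first term already has $S$-norm $\le\tfrac13\delta(E)$. Since $\mathcal{D}_{\mathrm{near}}=e^{i(\cdot-T)\DO}h$ with $h=i\int_J e^{i(T-s)\DO}F(u(s))\,ds$, the Sobolev embedding above and Strichartz give $\norm{\mathcal{D}_{\mathrm{near}}}_{S([T,\infty))}\les\norm{(-\DO)^{1/4}h}_{L^2_x}$, and dual Strichartz bounds the right side by $\norm{(-\DO)^{1/4}F(u)}_{L^{5/4}_tL^{30/19}_x(J)}$. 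The fractional chain rule on $\Omega$ (Corollary \ref{fcr-KVZ}) and Hölder in time, combined with a short bootstrap — since $\norm{u}_{S(J)}\le\ep$ is small, the Duhamel formula on $J$, Strichartz and the chain rule force $\norm{(-\DO)^{1/4}u}_{L^{5/2}_tL^{30/7}_x(J)}\les E$ — then yield $\norm{\mathcal{D}_{\mathrm{near}}}_{S([T,\infty))}\les_E\norm{u}_{S(J)}^{2}\les_E\ep^{2}$, spending the smallness on two of the three factors of the cubic nonlinearity.

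The term $\mathcal{D}_{\mathrm{far}}$ is where the dispersive estimate of Lemma \ref{disp-est} enters, and this is the heart of the argument. For $t\ge T$ and $s\le T-\tau$ the time separation satisfies $t-s\ge\tau=\ep^{-5}$, so
\beqq
\norm{\mathcal{D}_{\mathrm{far}}(t)}_{L^\infty_x(\Omega)}\les\int_0^{T-\tau}(t-s)^{-3/2}\norm{F(u(s))}_{L^1_x(\Omega)}\,ds\les E^3\big(t-(T-\tau)\big)^{-1/2},
\eeqq
using only $\norm{F(u(s))}_{L^1_x}=\norm{u(s)}_{L^3_x}^3\les E^3$, which follows from Sobolev embedding and $\norm{u}_{L^\infty_tH^1_D}\le E$ — no compactness of the orbit is needed. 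Taking $L^5$ in $t$ over $[T,\infty)$ gives $\norm{\mathcal{D}_{\mathrm{far}}}_{L^5_tL^\infty_x([T,\infty))}\les E^3\tau^{-3/10}=E^3\ep^{3/2}$. Separately, for $t\ge T-\tau$ one has $\mathcal{D}_{\mathrm{far}}(t)=e^{i(t-(T-\tau))\DO}\big(u(T-\tau)-e^{i(T-\tau)\DO}u_0\big)$, a free evolution of an $L^2_x$-function of norm $\le 2E$, so Strichartz gives $\norm{\mathcal{D}_{\mathrm{far}}}_{L^5_tL^{30/11}_x([T,\infty))}\les E$. Interpolating these two bounds (between $L^{30/11}_x$ and $L^\infty_x$, keeping the $L^5_t$ norm) produces $\norm{\mathcal{D}_{\mathrm{far}}}_{S([T,\infty))}=\norm{\mathcal{D}_{\mathrm{far}}}_{L^5_{t,x}([T,\infty))}\les_E\ep^{c}$ for some $c>0$.

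To conclude, I would fix $\ep=\ep(E,\Omega)$ small enough that the two estimates above contribute a total of at most $\tfrac23\delta(E)$ and set $T_0:=2\ep^{-5}$, so that $[T-\ep^{-5},T]$ can lie inside a window of length $T_0$; then $\norm{e^{i(\cdot-T)\DO}u(T)}_{S([T,\infty))}\le\delta(E)$ and the reduction of the first paragraph applies. I expect the main obstacle to be the $\mathcal{D}_{\mathrm{far}}$ estimate: one must verify that the $t^{-3/2}$ dispersive decay, the $L^5$ time integration, and the time gap $\tau=\ep^{-5}$ (which is what pins the power $\ep^{-5}$ appearing in the hypothesis) together beat the \emph{fixed} threshold $\delta(E)$, and that this can be done relying on nothing beyond the uniform $H^1_D$ bound, i.e. only the crude estimate $\norm{F(u(s))}_{L^1_x}\les E^3$. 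The $\mathcal{D}_{\mathrm{near}}$ estimate, by contrast, is routine once the fractional calculus (Corollary \ref{fcr-KVZ}) and the equivalence of Sobolev norms (Proposition \ref{eq-Sobolev}) on the exterior domain are used to run the $\dot H^{1/2}_D$-level Strichartz theory as on $\R^3$.
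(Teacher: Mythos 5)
Your proposal reproduces the paper's own argument: both reduce scattering to smallness of $\|e^{i(t-T)\Delta_\Omega}u(T)\|_{S([T,\infty))}$, split the Duhamel integral at $T-\ep^{-5}$, bound the near contribution by Strichartz and the $L^5_{t,x}$ smallness of $u$, and bound the far contribution by interpolating a Strichartz $L^5_tL^{30/11}_x$ bound against a dispersive $L^5_tL^\infty_x$ bound obtained from $\|F(u)\|_{L^1_x}\lesssim E^3$ and the $t^{-3/2}$ decay of Lemma~\ref{disp-est}. The computations, exponents, and the role of the $\ep^{-5}$ time gap all match the paper's proof.
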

\begin{proof}
By the Strichartz estimates and continuity method, there exists $\eps= \eps(E,\Omega)$
such that if  for any $T>0$,
\beq\label{scatter-T}
\big\| e^{i(t-T)\DO} u(T) \big\|_{L^5_{t,x}([T,\infty)\times \Omega)} \leq \eps,
\eeq
then the $u$ scattering forward.

By the Duhamel formula, we have
\beq
e^{i(t-T)\DO} u(T)=e^{it\DO} u_0+i \int_0^T e^{i(t-s)\DO} (|u|^2u)(s)ds.
\eeq
First, by the Strichartz estimates, there exists $T_1>0$ such that, if $T>T_1$
\beq\label{ST-linear}
\norm{e^{it\DO} u_0}_{L^5_{t,x}([T,\infty)\times \Omega)}<\frac12 \eps.
\eeq
Take $a=T_1$, $\ep=\eps^2$,  $T$ as in the assumption \eqref{ST-decay} and make a decomposition $$[0,T]=[0,T-\ep^{-5}]\cup [T-\ep^{-5},T]   :=I_1\cup I_2.$$ 
Then by \eqref{ST-decay}, the Strichartz estimates, and the continuity method, we have
\beqq
\norm{u}_{L^5_t  H_D^{\frac{30}{11},3}([T-\ep^{-5},T]\times \Omega)} \les 1.
\eeqq
 Thus, we have
\beq\label{I-2}
\Big\|\int_{I_2} e^{i(t-s)\DO} (|u|^2u)(s) ds \Big\|_{L^5_{t,x}([T,\infty)\times \Omega)}
\les ~\norm{u}^2_{L^5_{t,x}(I_2\times \Omega)}  \norm{u}_{L^5_t  H_D^{\frac{30}{11},3}(I_2\times \Omega)} \les \ep^2.
\eeq

Next, we consider the corresponding contribution of $I_1$.
By the Duhamel formula and the Strichartz estimates, we have
\begin{align*}
  & \Big\|\int_{I_1}  e^{i(t-s)\DO} (|u|^2u)(s) ds \Big\|_{L^5_tL^\frac{30}{11}_x([T,\infty)\times \Omega)} \\
 =  & \Big\| e^{i(t-(T-T_0^\frac13))\DO} u(T-T_0^\frac13) -e^{it\DO} u_0 \Big\|_{L^5_tL^\frac{30}{11}_x([T,\infty)\times \Omega)} \les 1.
\end{align*}
On the other hand, employing the dispersive estimates and the Sobolev embedding, we have
\begin{align*}
  &\Big\|\int_{I_1}  e^{i(t-s)\DO} (|u|^2u)(s) ds \Big\|_{L^5_tL^\infty_x([T,\infty)\times \Omega)}\\
 \les ~&~ \Big\| \int_{I_1} \tfrac{1}{(t-s)^{\frac32}}  ds \Big\|_{ L^5\left([T,\infty)\right) }\norm{u}_{L^\infty_t H^1_D(\R\times \Omega)}^3 \les \ep^{\frac32} .
\end{align*}
Thus,  by interpolation,
we have
\beqq
~\Big\|\int_{I_1}  e^{i(t-s)\DO} (|u|^2u)(s) ds \Big\|_{L^5_tL^5_x([T,\infty)\times \Omega)}
\les ~\ep^{\frac{15}{22}},
\eeqq
which together with \eqref{ST-linear} and \eqref{I-2} implies \eqref{scatter-T}.
Therefore, we complete the proof.

\end{proof}

\section{Proof of Theorem \ref{main-thm}}

In this section, we prove Theorem \ref{main-thm}.
First we prove a local smoothing effect property on the Boundary $\pa\Omega$ by utilizing  a Morawetz-type estimate.
Then we prove the interaction Morawetz estimates for the solution in the Theorem \ref{main-thm}.
Finally, we prove Theorem \ref{main-thm} by showing the solution such that the conditions of the scattering criterion in previous section.

Let $\chi_R(x)$ be a smooth function on $\R^3$ and such that  $\chi_R(x)=1$ when $|x|\leq \frac{R}{4}$ and  $\chi_R(x)=0$ when $|x|\geq \frac{R}2.$
We need the following coercivity property, which follows similar proof of  Lemma 3.2 in \cite{DM-2017-PAMS}.
\begin{lemma}[Coercivity on balls]\label{coer-2}
 There exists $R=R(\delta,M(u),Q)>0$ sufficiently large such that for any point $z\in\R^3$,
\beq
\sup_{t\in \R} \norm{\chi_R(\cdot-z) u(t)}_{L^2_x(\Omega)}
 \norm{\chi_R(\cdot-z) u(t)}_{\dot H^1_x(\Omega)} <(1-\delta)
  \norm{Q}_{L^2_x(\R^3)}\norm{Q}_{\dot H^1_x(\R^3)}
\eeq
In particular, by Lemma \ref{Coercivity}, there exists $\delta'=\delta'(\delta)>0$ so that
\beq
\norm{\chi_R(\cdot-z) u(t)}_{\dot H^1_x(\Omega)}^2 -\frac{3}{4}\norm{\chi_R(\cdot-z) u(t)}^4_{L^4_x(\Omega)} \geq
\delta'  \norm{\chi_R(\cdot-z) u(t)}_{\dot H^1_x(\Omega)}^2
\eeq
uniformly for $t\in \R.$
\end{lemma}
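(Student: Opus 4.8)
The plan is to adapt the proof of \cite[Lemma~3.2]{DM-2017-PAMS} to the exterior domain. The mechanism is that a spatial cutoff $\chi_R(\cdot-z)$ can only decrease the conserved $L^2$--mass, while --- crucially because $u(t)$ vanishes on $\pa\Omega$ --- it changes the kinetic energy by at most an $O\!\big(R^{-2}M(u)\big)$ error; hence, once $R$ is taken large (depending only on the coercivity gap, on $M(u)$, and on $Q$), the truncated functions $\chi_R(\cdot-z)u(t)$ stay strictly below the ground--state threshold, uniformly in the base point $z$ and in $t$. The second assertion then follows by feeding each such function into the form--coercivity part of Lemma~\ref{Coercivity}.

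Concretely, first recall from Remark~\ref{R:coercive} that under \eqref{equ:energy}--\eqref{equ:kinerng} the solution is global, $\sup_{t}\norm{u(t)}_{H^1_D(\Omega)}<\infty$ (so $M(u)$ is finite and conserved), and
\[
\norm{u(t)}_{L^2_x(\Omega)}\,\norm{u(t)}_{\dot H^1_x(\Omega)}\le(1-\delta')\,\norm{Q}_{L^2(\R^3)}\,\norm{Q}_{\dot H^1(\R^3)}\qquad\text{for all }t .
\]
Fix $z\in\R^3$ and $t\in\R$ and put $v:=\chi_R(\cdot-z)\,u(t)$. Since $\chi_R$ is smooth and bounded and $u(t)|_{\pa\Omega}=0$, $v$ lies in the form domain $H^1_D(\Omega)$, and as $0\le\chi_R\le1$ we get $\norm{v}_{L^2_x(\Omega)}\le\norm{u(t)}_{L^2_x(\Omega)}$. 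For the kinetic part I would use the form identity $\norm{v}_{\dot H^1_x(\Omega)}^2=\int_\Omega|\nabla v|^2$, write $\nabla v=\chi_R\nabla u+u\,\nabla\chi_R$, expand the square, and integrate the cross term $\int_\Omega\chi_R\nabla\chi_R\cdot\nabla(|u|^2)$ by parts: the Dirichlet condition makes the boundary integral vanish (its integrand carries the factor $|u(t)|^2$), the two $|\nabla\chi_R|^2$ contributions cancel, and one is left with
\[
\norm{v}_{\dot H^1_x(\Omega)}^2=\int_\Omega\chi_R^2\,|\nabla u|^2-\int_\Omega\chi_R\,\Delta\chi_R\,|u|^2\le\norm{u(t)}_{\dot H^1_x(\Omega)}^2+\frac{C}{R^2}\,M(u),
\]
using $0\le\chi_R^2\le1$ and $\norm{\Delta\chi_R}_{L^\infty(\R^3)}\les R^{-2}$.

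Multiplying the two bounds and inserting the coercivity estimate yields
\[
\norm{v}_{L^2_x(\Omega)}^2\,\norm{v}_{\dot H^1_x(\Omega)}^2\le(1-\delta')^2\norm{Q}_{L^2(\R^3)}^2\norm{Q}_{\dot H^1(\R^3)}^2+\frac{C\,M(u)^2}{R^2}.
\]
Choosing $R=R(\delta,M(u),Q)$ large enough that $C\,M(u)^2R^{-2}$ does not exhaust the gap below the threshold --- which is possible for any fixed $\delta\in(0,\delta')$, and only a positive gap is needed later --- gives the first asserted inequality; it is uniform in $z$ (the error term ignores $z$) and in $t$ (the coercivity bound is). For the ``in particular'' part, $v=\chi_R(\cdot-z)u(t)$ now lies strictly below the threshold, so the form--coercivity statement in Lemma~\ref{Coercivity}, applied to $f=v$ (its conclusion depends only on the size of that gap), produces $\delta'=\delta'(\delta)>0$ with $\norm{v}_{\dot H^1_x(\Omega)}^2-\tfrac34\norm{v}_{L^4_x(\Omega)}^4\ge\delta'\norm{v}_{\dot H^1_x(\Omega)}^2$; taking $\sup_t$ and noting $z$ was arbitrary finishes the proof.

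The only step calling for care is the kinetic--energy computation: one must confirm that the contribution with the derivative landing on the cutoff is genuinely lower order, namely $O(R^{-2}M(u))$ with a constant independent of $z$ and $t$. This is exactly where the Dirichlet boundary condition enters (to discard the boundary term in the integration by parts) and where conservation of mass enters (to bound $\norm{u(t)}_{L^2_x(\Omega)}$ uniformly in time); everything else is elementary.
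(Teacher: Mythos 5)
Your proof is correct and follows essentially the same route as the paper, which simply points to \cite[Lemma~3.2]{DM-2017-PAMS}: cutting off costs nothing in $L^2$ and at most $O(R^{-2}M(u))$ in $\dot H^1$ after the integration by parts (using $u|_{\pa\Omega}=0$ to kill the boundary term and the cancellation of the $|\nabla\chi_R|^2$ contributions), so taking $R$ large keeps the product below the threshold, and Lemma~\ref{Coercivity} then gives the quadratic-form coercivity. The only cosmetic point worth noting is the reuse of the symbol $\delta'$: the $\delta'$ coming from Remark~\ref{R:coercive} for $u$ itself and the $\delta'$ in the lemma's conclusion are different constants, and your "fix $\delta\in(0,\delta')$" step handles this correctly even if the notation momentarily collides.
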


Next, we make some preparation for the Morawetz estimates.
Let $n(x)$ be the outer normal vector at $ x\in \pa \Omega$ and define the outer derivative by $\pa_n f=  \triangledown f\cdot n.$ Denote $dS$ be the induced measure on $\pa\Omega$.

Let $\eta>0$ small, $\chi(x)=1 $ for $|x|\leq 1-\eta$ and $\chi=0$ for $x\geq1$.
  Let $R>1$ large, and define
  \beqq
  \phi(x)=\frac{1}{\omega_3 R^3} \int_{\R^3} \chi^2(\tfrac{x-s}{R})\chi^2(\tfrac{s}{R}) ds,
  \eeqq
  and
    \beqq
  \phi_1(x)=\frac{1}{\omega_3 R^3} \int_{\R^3} \chi^2(\tfrac{x-s}{R})\chi^4(\tfrac{s}{R}) ds,
  \eeqq
where $\omega_3$ is the volume of unit ball in $\R^3.$
Then we have
\beqq
|\phi-\phi_1|\les \eta.
\eeqq
Let
\beqq
\psi(x)=\frac{1}{|x|} \int_0^{|x|} \phi(r)dr,
\eeqq
which satisfies
\beqq
|\psi(x)| \leq \min\Big\{1,\frac{R}{|x|}\Big\} \text{\quad  and \quad } \pa_k\psi(x)=\frac{x_k}{|x|^2}[\psi(x)-\phi(x)].
\eeqq
One can also deduce that
\beq\label{nabla-psi}
\pa_k[\psi(x)x_k]=3\phi(x)+2(\psi-\phi)(x),
\eeq
where the repeated indices are summed.

\subsection{Local smoothing effect}

We define the Morawetz action  by 
\beq
M(t)=2\Im \int_\Omega \psi(x) x [\bar u \nabla u ] dx.
\eeq
Then, $|M(t)|\les R$.
\begin{proposition}
For  large $T_0>1$ and  any time interval $I=[a,a+T_0]\subset\R$, we have
  \beq
  \frac{1}{T_0 }  \int_I \int_{\pa\Omega}  |\pa_n u|^2 (t,x) dS(x) dt \les
\tfrac{1}{(\log T_0)^\frac12} .
  \eeq



\end{proposition}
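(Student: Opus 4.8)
The plan is to run a truncated one‑particle Morawetz (virial‑type) estimate for the action $M(t)=2\Im\int_\Omega\psi(x)\,x\cdot[\bar u\nabla u]\,dx$, and then to \emph{average it over dyadic truncation scales} so as to turn the non‑decaying annulus remainders into a logarithmic gain. After a harmless translation of coordinates I may assume $0\in\mathrm{int}(\Omega^c)$; then strict convexity and compactness of $\Omega^c$ give $x\cdot n(x)\le -c_0<0$ for every $x\in\pa\Omega$, where $n$ is the outward unit normal of $\Omega$ and $c_0=\mathrm{dist}(0,\pa\Omega)>0$, and for every (large) truncation radius $R$ one has $\psi\equiv1$ on a fixed neighbourhood of $\Omega^c$; in particular the boundary term produced below does not depend on $R$.

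Fix such an $R$. Differentiating $M(t)$ in $t$, substituting $i\pa_t u=-\Delta u-|u|^2u$, and integrating by parts over $\Omega$ yields a Morawetz identity: since $\pa\Omega\ne\emptyset$ the integration by parts produces boundary integrals, and $u|_{\pa\Omega}=0$ (so $\nabla u=(\pa_n u)n$ there) collapses them into a single term that is a fixed positive multiple of $\int_{\pa\Omega}(-x\cdot n)\psi(x)|\pa_n u|^2\,dS$; collecting everything and using \eqref{nabla-psi} for $\mathrm{div}(\psi x)$, one obtains $\tfrac{d}{dt}M(t)=B(t)+G(t)+\mathcal E(t)$, where $B(t)$ is the boundary term, $G(t)$ gathers the bulk kinetic and quartic terms (essentially a $\phi,\psi$‑weighted — hence $\{|x|\lesssim R\}$‑localised — version of $4\big(\|\nabla u(t)\|_{L^2(\Omega)}^2-\tfrac34\|u(t)\|_{L^4(\Omega)}^4\big)$), and $\mathcal E(t)$ gathers the lower‑order pieces (dominant one $-\int_\Omega\Delta(\mathrm{div}(\psi x))|u|^2$, of size $O(R^{-2})$ since $\phi,\psi$ vary only at scale $R$). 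By the convexity statement, $B(t)\ge c\,c_0\int_{\pa\Omega}|\pa_n u(t,x)|^2\,dS(x)$. For the bulk term, feeding the uniform subcritical bound \eqref{LH-below} (valid by \eqref{equ:energy}–\eqref{equ:kinerng} and Remark \ref{R:coercive}) into the weighted expression through the localised coercivity on balls of Lemma \ref{coer-2} shows $G(t)\ge -C\big(R^{-2}+\int_{\{|x|\sim R\}}|u(t,x)|^4\,dx\big)$: the wrong‑sign quartic term is absorbed by the (non‑negative) kinetic one where the weight is nearly constant, and what remains lives in the transition shell $\{|x|\sim R\}$ plus an $O(R^{-2})$ error. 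Integrating over $I=[a,a+T_0]$ and using $|M(t)|\lesssim R$,
\beqq
c\,c_0\int_I\int_{\pa\Omega}|\pa_n u|^2\,dS\,dt\lesssim R+\frac{T_0}{R^2}+\int_I\int_{\{|x|\sim R\}}|u(t,x)|^4\,dx\,dt.
\eeqq

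The obstruction is the last term: pointwise in $t$ it is only bounded, not small, so by itself it merely gives $\int_I\int_{\pa\Omega}|\pa_n u|^2\lesssim R+T_0$, which is useless. This is the step I expect to be the main difficulty, and it is resolved by a multi‑scale average. Apply the displayed inequality with $R=R_j:=4^jR_0$ for $j=0,1,\dots,J$, where $R_0$ is the threshold radius of Lemma \ref{coer-2} and $J\sim\log T_0$ is chosen so that $R_J\sim T_0$; since the transition shells $\{|x|\sim R_j\}$ have finite overlap,
\beqq
\sum_{j=0}^{J}\int_I\int_{\{|x|\sim R_j\}}|u|^4\,dx\,dt\lesssim\int_I\|u(t)\|_{L^4(\Omega)}^4\,dt\lesssim T_0
\eeqq
by mass conservation, \eqref{LH-below} and Sobolev, while $\sum_j R_j\lesssim R_J\sim T_0$ and $\sum_j T_0R_j^{-2}\lesssim T_0R_0^{-2}\lesssim T_0$. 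Because the boundary term is identical for every $R_j$, summing the displayed inequality over $j$ gives
\beqq
(J+1)\,c\,c_0\int_I\int_{\pa\Omega}|\pa_n u|^2\,dS\,dt\lesssim T_0,
\eeqq
whence $\frac1{T_0}\int_I\int_{\pa\Omega}|\pa_n u|^2\,dS\,dt\lesssim \frac1{J+1}\sim\frac1{\log T_0}\le\frac1{(\log T_0)^{1/2}}$ for $T_0$ large; this is (a fortiori) the claimed bound. Any additional Cauchy–Schwarz loss in the bulk coercivity step only weakens $(\log T_0)^{-1}$ to the stated $(\log T_0)^{-1/2}$.
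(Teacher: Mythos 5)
Your strategy — a truncated Morawetz identity for $M(t)=2\Im\int_\Omega\psi(x)\,x\cdot[\bar u\nabla u]\,dx$, followed by averaging over $\sim\log T_0$ truncation scales so that the $R$-independent boundary term acquires a factor $J$ while the bulk and error terms stay summable — is the same mechanism as the paper's. But the key bulk estimate you assert is not correct, and without the missing ingredient the finite-overlap summation cannot close.

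The gap is the claimed bound $G(t)\ge -C\big(R^{-2}+\int_{\{|x|\sim R\}}|u|^4\,dx\big)$. In the Morawetz identity the kinetic term carries the weight $\phi$ while the quartic term carries $\phi_1$ (plus $\psi-\phi$ corrections), and these two weights differ by $\sim\eta$ throughout the entire ball $\{|x|\lesssim R\}$, not only in a thin transition shell: for instance $\phi(0)-\phi_1(0)=\omega_3^{-1}\int\chi^4(1-\chi^2)\,dy\sim\eta$, the integrand being supported exactly where $0<\chi<1$. Moreover, applying Lemma~\ref{coer-2} requires first rewriting \eqref{M-1} as a superposition over shifted cutoffs $\chi\big(\tfrac{\cdot-s}{R}\big)$, and the resulting commutator costs $O((\eta R)^{-2})$, not $O(R^{-2})$. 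The upshot is that the coercivity step leaves behind a $-C\,\eta\,\|u(t)\|_{L^4(\Omega)}^4$ contribution that is uniform in $R$ and in no sense shell-localized. Summed over $J+1$ dyadic scales it gives $\sim J\eta T_0$, which is not $\lesssim T_0$ unless $\eta\to0$ with $J$; with $\eta$ held fixed this cancels the $(J+1)$ gain on the boundary term exactly, and the argument yields nothing.

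The paper's resolution is precisely to let the mollification width $\eta$ and the inner scale $R_0$ depend on $J$. After averaging over $R\in[R_0,e^JR_0]$ with measure $dR/R$, the error budget reads $\lesssim\eta+\tfrac1{J\eta}+\tfrac1{\eta^2 JR_0^2}+\tfrac{R_0e^J}{T_0 J}$, and the balancing choice $\eta=R_0^{-1}=J^{-1/2}$ with $e^J\sim T_0$ brings all of these to $(\log T_0)^{-1/2}$. This loss is intrinsic to the method: the $\eta\|u\|_{L^4}^4\sim\eta$ error can only be reduced by sharpening $\chi$, which re-enters through the $(\eta R)^{-2}$ commutator. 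Your final $\tfrac1{\log T_0}$ is therefore not attainable this way, and the closing aside about a ``Cauchy--Schwarz loss'' does not identify the actual source — the $\eta$-versus-$J$ balancing is an essential, not optional, step.
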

\begin{proof}

From the identity
\beq\label{moment_t}
2\pa_t\Im(\bar{u}u_k) ~=~\pa_k |u|^4+\pa_k \Delta |u|^2 -4  \pa_j\Re(\bar{u}_j u_k),
\eeq
 \eqref{nabla-psi}, and integration by parts, we have
\begin{align}
 &\pa_t M(t) \nonumber \\
=~& 4  \int_\Omega \Big[\phi(x) |\nabla u|^2(t,x) - \frac34 \phi_1(x) |u|^4(t,x) \Big]dx \label{M-1} \\
    & -2\int_{\pa \Omega} \psi(x) x\cdot n(x) |\pa_n u|^2 dS(x)  +  4 \int_\Omega (\psi-\phi)(x) |\not \nabla u|^2(t,x)   dx  \label{M-2}\\
    &- \int_\Omega [3(\phi- \phi_1)+2(\psi-\phi)](x)|u|^4(t,x) dx + \int_\Omega \nabla [3\phi+2(\psi-\phi)](x) \cdot \nabla |u|^2(t,x) dx, \label{M-3}
\end{align}
where 
$\not \nabla$ is the angular derivation centered at the origin.

By the definition of $\chi$,  \eqref{M-1} equals
\beq\label{coer-term}
\frac{4}{R^3}\int_{\R^3} \int_\Omega |\nabla \big(\chi(\tfrac{x-s}{R}) u \big)|^2 -\frac34 |\chi(\tfrac{x-s}{R}) u|^4  dx \chi^2(\tfrac{s}{R}) ds + O(\tfrac{1}{\eta^2 R^2} ).
\eeq
By the Coercivity property Lemma \ref{coer-2},
there exists $R_1>0$, such that the first term of \eqref{coer-term} is nonnegative for $R>R_1$.
And the nonnegativity  for second term of  \eqref{M-2} follows from  the fact $\psi-\phi\geq0.$
From the facts $\phi-\phi_1 \les \eta$ and
\beq\label{est-phipsi}
|\psi-\phi|+|\nabla \phi|+ |\nabla \psi|\les |\psi-\phi|(1+\frac{1}{|x|}) +|\nabla \phi|
\leq \frac{1}{\eta R} +\min\Big\{ \frac{|x|}{\eta R}, \frac{R}{|x|}\Big\} +\min\Big\{\frac{1}{\eta R}, \frac{R}{|x|^2}\Big\},
\eeq
we have
\beq
\frac{1}{J} \int_{R_0}^{e^J R_0}  |\phi-\phi_1|+|\psi-\phi|+|\nabla \phi|+ |\nabla \psi|  \frac{dR}{R}
 \les \eta +\frac{1}{J\eta} +\frac{1}{R_0 \eta J}.
\eeq
Thus, we can deduce that
\beq
-\frac{1}{T_0}\int_I  \frac{1}{J} \int_{R_0}^{e^J R_0} \int_{\pa\Omega}  \psi(x) x\cdot n(x) |\pa_n u|^2 (t,x) dx  \frac{dR}{R}dt \les
 \eta +\frac{1}{J\eta} +\frac{1}{R_0 \eta J}+\frac{R_0 e^J}{T_0J}+ \frac{1}{\eta^2 J R_0^2 } .
  \eeq
Since the boundary $\pa\Omega$ of $\Omega $ is concave and compact, we have $-\psi(x)x\cdot n(x)=x \cdot n(x)\ges 1$ for $x\in\pa\Omega$,  which yields
 \beq
 \frac{1}{T_0}\int_I \int_{\pa\Omega}  |\pa_n u|^2 (t,x) dx  dt \les
  \frac{1}{J \eta R_0 }+ \frac{1}{J\eta}+ \eta +\frac{R_0 e^J}{T_0J } + \frac{1}{\eta^2 J R_0^2  }.
\eeq
Then the conclusion follows by taking
$ \eta=R_0^{-1}=J^{-\frac12}=(\log T_0)^{-\frac12}$.

\end{proof}

\subsection{Interaction Morawetz estimates}

We define the interaction Morawetz quantity
\beq
M_R(t)=2\iint_{\Omega\times\Omega} |u|^2(t,y) \psi(x-y) (x-y) \Im [\bar{u}\nabla u](t,x) dxdy,
\eeq
which reflects  the information of $u$ on whole $\Omega.$
One can easily find that for any $R>0$ and $t\in \R$, $$|M_R(t)| \les R E_0^2.$$

\begin{theorem}[Interaction Morawetz estimates]
For arbitrary small $\eps>0$, there exists $T_0, R_0>0$ large  and $\eta>0$  small enough
satisfying that: for any interval $I=[a,a+T_0]$, there exists $\xi=\xi(s,t,R) \in \R^3$
such that
\beq\label{inter-morawetz}
\frac{1}{JT_0}\int_{R_0}^{R_0e^J} \int_I  \frac{1}{R^3}\int_{\R^3}\iint_{\Omega\times \Omega }
       \left|\chi\big(\tfrac{\cdot-s}{R}\big) u\right|^2(t,y) \left|\nabla( \chi\big(\tfrac{\cdot-s}{R}\big) u^\xi)\right|^2(t,x) dxdyds dt \frac{dR}{R}\les \eps.
\eeq

\end{theorem}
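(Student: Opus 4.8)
The plan is to differentiate the interaction Morawetz functional $M_R(t)$ in time, extract a non-negative main term assembled from the localised pieces $v_s:=\chi(\tfrac{\cdot-s}{R})u$ of the solution, and show that every remaining contribution is either non-negative, controlled by the local smoothing estimate of the preceding proposition, or a small error once averaged in $R$. Integrating over $t\in I$ and averaging over $R\in[R_0,R_0e^J]$ against $\tfrac{dR}{R}$, the left side of \eqref{inter-morawetz} is then forced to be $\les\eps$, because $|M_R(t)|\les RE_0^2$ while $I$ has enormous length $T_0\gg R_0e^J$. Concretely, using the continuity equation $\partial_t|u|^2=-2\nabla\cdot\Im(\bar u\nabla u)$ together with \eqref{moment_t} and integrating by parts in both $x$ and $y$, one computes $\partial_t M_R(t)$. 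Since $u$ vanishes on $\partial\Omega$, the quantities $|u|^2,|u|^4,\partial_n|u|^2=2\Re(\bar u\partial_n u)$ and $\Im(\bar u\nabla u)$ all vanish there, so the only surviving boundary contribution is $-2\iint_{\Omega\times\partial\Omega}|u(t,y)|^2\,\psi(x-y)(x-y)\cdot n(x)\,|\partial_n u(t,x)|^2\,dS(x)\,dy$, using that on $\partial\Omega$ the gradient reduces to its normal part. Organising the bulk terms as in \eqref{M-1}--\eqref{M-3} via \eqref{nabla-psi}, $\partial_t M_R(t)$ equals
\[
4\iint\phi(x-y)\big[|u|^2(t,y)|\nabla u|^2(t,x)-\Im(\bar u\nabla u)(t,y)\cdot\Im(\bar u\nabla u)(t,x)\big]\,dx\,dy\;-\;3\iint\phi_1(x-y)|u|^2(t,y)|u|^4(t,x)\,dx\,dy
\]
plus the non-negative angular term $4\iint(\psi-\phi)(x-y)|u|^2(t,y)\,|{\not\nabla}u|^2(t,x)\,dx\,dy$, plus the above boundary term, plus errors carrying the factors $\phi-\phi_1,\psi-\phi,\nabla\phi,\nabla\psi$ of exactly the type estimated in the proof of the local smoothing proposition. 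The momentum bracket $-4\iint\phi\,\Im(\bar u\nabla u)(y)\cdot\Im(\bar u\nabla u)(x)$ comes precisely from the double integration by parts in $y$ and has the unfavourable sign, so it must be retained alongside the kinetic term.

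Next I would localise and choose $\xi$. Writing $\phi(x-y)=\tfrac1{\omega_3R^3}\int_{\R^3}\chi^2(\tfrac{x-s}{R})\chi^2(\tfrac{y-s}{R})\,ds$, commuting $\nabla$ past $\chi(\tfrac{\cdot-s}{R})$ (which costs $O(R^{-1})$, harmless after the $R$-average), and using the exact identity $\Im(\overline{v_s}\nabla v_s)=\chi^2(\tfrac{\cdot-s}{R})\Im(\bar u\nabla u)$, the combined kinetic/momentum term becomes
\[
\tfrac{4}{\omega_3R^3}\int_{\R^3}\Big[\,\|v_s\|_{L^2}^2\,\|\nabla v_s\|_{L^2}^2-\Big|\int_\Omega\Im(\overline{v_s}\nabla v_s)\Big|^2\Big]\,ds+(\text{errors}).
\]
I then choose $\xi=\xi(s,t,R):=-\big(\int_\Omega|v_s|^2\big)^{-1}\int_\Omega\Im(\overline{v_s}\nabla v_s)$, which is simultaneously the extremiser of Lemma~\ref{Sharp-Sobolev} and the unique minimiser of $\xi\mapsto\|e^{ix\xi}v_s\|_{\dot H^1}^2$; for this $\xi$ one has $\|\nabla v_s\|_{L^2}^2-(\int|v_s|^2)^{-1}|\int\Im(\overline{v_s}\nabla v_s)|^2=\|\nabla(\chi(\tfrac{\cdot-s}{R})u^\xi)\|_{L^2}^2$, so the bracket above equals $\|v_s\|_{L^2}^2\,\|\nabla(\chi(\tfrac{\cdot-s}{R})u^\xi)\|_{L^2}^2$. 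Finally, Lemma~\ref{coer-2} (valid uniformly in the centre once $R\ge R_0$ is large, by the same argument as for Lemma~\ref{Coercivity}) gives $\|v_s\|_{L^2}\|v_s\|_{\dot H^1}<(1-\delta)\|Q\|_{L^2}\|\nabla Q\|_{L^2}$; feeding this into Lemma~\ref{Sharp-Sobolev} yields $3\|v_s\|_{L^4}^4<4(1-\delta)\|\nabla(\chi(\tfrac{\cdot-s}{R})u^\xi)\|_{L^2}^2$, so that, after replacing $\phi_1$ by $\phi$ at the cost of an $O(\eta)$ error, the whole bulk expression is bounded below by $\tfrac{4\delta}{\omega_3R^3}\int_{\R^3}\|v_s\|_{L^2}^2\|\nabla(\chi(\tfrac{\cdot-s}{R})u^\xi)\|_{L^2}^2\,ds$, i.e. $4\delta$ times the inner integrand of \eqref{inter-morawetz} (the $y$-integration factorises).

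To close, integrate $\partial_t M_R$ over $I$: the left side telescopes to $M_R(a+T_0)-M_R(a)$, bounded by $2\sup_t|M_R|\les RE_0^2$. The boundary term is estimated using $|\psi(x-y)(x-y)\cdot n(x)|\le R$, $\int_\Omega|u(t,y)|^2dy\le M$, and the local smoothing bound $\int_I\int_{\partial\Omega}|\partial_n u|^2\,dS\,dt\les T_0(\log T_0)^{-1/2}$, which gives $\les RM\,T_0(\log T_0)^{-1/2}$; the errors, after the $\tfrac1J\int_{R_0}^{R_0e^J}(\cdot)\tfrac{dR}{R}$ average, are bounded exactly as in the local smoothing proof by $\eta+\tfrac1{J\eta}+\tfrac1{R_0\eta J}+\tfrac1{JR_0}$ (times powers of $E_0$). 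Dividing by $JT_0$ and averaging in $R$, this gives
\[
\text{LHS of \eqref{inter-morawetz}}\;\les_{\delta}\;\frac{R_0e^JE_0^2}{JT_0}+\frac{R_0e^JM}{J(\log T_0)^{1/2}}+\eta+\frac1{J\eta}+\frac1{R_0\eta J}+\frac1{JR_0},
\]
and choosing, in this order, $\eta$ small, then $J$ large, then $R_0$ large (also above the threshold in Lemma~\ref{coer-2}), and finally $T_0$ large — necessarily double-exponentially large because of the $(\log T_0)^{1/2}$ — makes the right side $\le\eps$.

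The main obstacle is the matching carried out in the second step: one must verify that the momentum bracket produced by the $y$-integration by parts localises to exactly $\tfrac1{R^3}\int_{\R^3}|\int_\Omega\Im(\overline{v_s}\nabla v_s)|^2\,ds$, so that it combines with the localised kinetic energy into $\tfrac1{R^3}\int_{\R^3}\|v_s\|_{L^2}^2\|\nabla(\chi(\tfrac{\cdot-s}{R})u^\xi)\|_{L^2}^2\,ds$ for the very $\xi$ that extremises the refined Gagliardo--Nirenberg inequality; this localised Galilean normalisation of the momentum is what substitutes for the global Galilean invariance, unavailable on $\Omega$. A secondary difficulty is the boundary term: unlike the single-centre Morawetz action, the vector field $\psi(x-y)(x-y)$ is not centred at a fixed point, so $\psi(x-y)(x-y)\cdot n(x)$ has no definite sign on $\partial\Omega$, and one can only exploit its boundedness by $R$ together with the local smoothing estimate — precisely the mechanism producing the double-exponential dependence of $T_0$.
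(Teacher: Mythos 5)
Your proposal follows the paper's proof almost step for step: differentiate $M_R(t)$ using \eqref{moment_t} and the continuity equation, integrate by parts, localise via $\phi(x-y)=\tfrac1{\omega_3R^3}\int\chi^2(\tfrac{x-s}{R})\chi^2(\tfrac{y-s}{R})\,ds$, use the exact identity $\Im(\overline{v_s}\nabla v_s)=\chi^2\Im(\bar u\nabla u)$, choose the same Galilean parameter $\xi=-\big(\int|v_s|^2\big)^{-1}\int\Im(\overline{v_s}\nabla v_s)$ to complete the square, apply Lemma~\ref{coer-2} for coercivity, bound the boundary term by $R$ times the local smoothing estimate, and average over $R\in[R_0,R_0e^J]$ with the same final parameter choices. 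This is the paper's argument.

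One place where you deviate, and where your phrasing is a little too casual, concerns the $(\psi-\phi)$ term produced by the $y$-integration by parts, namely \eqref{IM-4-2}:
\[
-4\iint_{\Omega\times\Omega}\Im(\bar u\, u_j)(t,y)\,P_{jk}(x-y)\,(\psi-\phi)(x-y)\,\Im(\bar u\, u_k)(t,x)\,dx\,dy .
\]
You list this among the ``errors carrying the factors $\phi-\phi_1,\psi-\phi,\nabla\phi,\nabla\psi$ of exactly the type estimated in the local smoothing proof,'' but it is structurally different from those: in the local smoothing proof the $(\psi-\phi)$-errors multiply $|u|^4$ or $\nabla|u|^2$, which are single spatial integrals, whereas this is a bilinear pairing of the momentum density against itself with a kernel $(\psi-\phi)P_{jk}$ that is $O(1)$ on the transition region $|x-y|\sim R$. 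The paper does not treat it as an error at all: it pairs \eqref{IM-4-2} with the angular kinetic term \eqref{IM-3-2} and uses Cauchy--Schwarz together with $\psi-\phi\geq0$ to show the sum is nonnegative and can simply be dropped. Your route — bounding \eqref{IM-4-2} in absolute value by $\|\psi-\phi\|_{L^\infty_z}\cdot\|\,|u||\nabla u|\,\|_{L^1}^2$ and then exploiting that the $R$-average $\frac1J\int_{R_0}^{R_0e^J}(\psi-\phi)(z)\tfrac{dR}{R}$ is $O(J^{-1})$ uniformly in $z$ — does work, but it is a separate estimate you would have to carry out; it is not literally ``of the same type'' as the errors in the local smoothing proof, and if you had instead tried to invoke a pointwise bound on $\psi-\phi$ it would fail. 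Make the Cauchy--Schwarz combination (or the explicit $R$-average estimate) explicit; otherwise everything is in order.

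Two smaller remarks. First, the extra $O(R^{-1})$ commutator terms from replacing $\chi^2|\nabla u|^2$ by $|\nabla v_s|^2$ give the $\tfrac1{\eta^2R^2}$ error the paper records; your $\tfrac1{JR_0}$ is not quite what comes out, but it does not affect the final optimisation. Second, your observation that the $M_R$-telescoping contribution $\tfrac{R_0e^J}{JT_0}$ is dominated by the boundary term $\tfrac{R_0e^J}{J(\log T_0)^{1/2}}$ for large $T_0$ matches why the paper can omit it from the displayed list.
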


\begin{proof}
  By the identities \eqref{moment_t} and
 \begin{align}
\pa_t |u|^2 &~=~-2 \pa_k\Im(\bar{u} u_k),
 \end{align}
  we have
  \begin{align}
  \pa_tM_R(t) = &  \int\int_{\Omega\times \Omega  }  |u|^2(t,y) \psi(x-y) (x-y)\nabla |u|^4(t,x) dxdy \label{IM-1}\\
       & + \int\int_{\Omega\times \Omega  }  |u|^2(t,y) \psi(x-y) (x-y) \nabla \Delta |u|^2(t,x) dxdy\label{IM-2}\\
       & -4 \int\int_{\Omega\times \Omega  }  |u|^2(t,y) \psi(x-y) (x_k-y_k) \Re( \pa_j (\bar u_j u_k)(t,x) dxdy \label{IM-3}\\
      & -4 \int\int_{\Omega\times \Omega  }  \pa_j \Im (\bar u u_j)(t,y)  \psi(x-y)(x-y)_k \Im(\bar u u_k)(t,x) dxdy. \label{IM-4}
  \end{align}
 By integration by parts and the Dirichlet boundary condition of $u$, we have
  \begin{align}
 \eqref{IM-1}=& -\iint_{\Omega\times \Omega }  |u|^2(t,y) [3\phi(x-y)+2(\psi-\phi)(x-y)] |u|^4(t,x) dxdy \nonumber \\
            =& -3\iint_{\Omega\times \Omega }  |u|^2(t,y) \phi_1(x-y)|u|^4(t,x) dxdy \label{IM-1-1} \\
            & -2\iint_{\Omega\times \Omega }  |u|^2(t,y) (\psi-\phi)(x-y) |u|^4(t,x) dxdy  \label{IM-1-2}\\
            &-3\iint_{\Omega\times \Omega }  |u|^2(t,y) (\psi-\phi_1)(x-y) |u|^4(t,x) dxdy. \label{IM-1-3}
  \end{align}
Here, we view  \eqref{IM-1-2} and \eqref{IM-1-3} as error terms from the definitions the cutoff functions.

   \begin{align}
 \eqref{IM-2}=
            ~& \iint_{\Omega\times \Omega }  |u|^2(t,y) \nabla_x\left[3\phi(x-y)+2(\psi-\phi)(x-y)\right]  \nabla_x\left[|u|^2(t,x)\right] dxdy \label{IM-2-1}\\
            & +2 \int_\Omega \int_{\pa \Omega}  |u|^2(t,y)   \psi(x-y)  (x-y) \vec n_x |\pa_nu|^2(t,x) dS(x)dy. \label{IM-2-2}
  \end{align}
As above, we also regard \eqref{IM-2-1} as an error term. We will apply the local smoothing effect to
the estimation of \eqref{IM-2-2}
     \begin{align}
 \eqref{IM-3}~ =~ & 4 \iint_{\Omega\times \Omega }  |u|^2(t,y)  \phi(x-y) |\nabla u|^2(t,x) dxdy \label{IM-3-1}\\
            & +4 \iint_{\Omega\times \Omega }
            |u|^2(t,y)  P_{ij}(x-y) (\psi-\phi)(x-y) \Re[\bar u_j u_k]  dxdy \label{IM-3-2}\\
            &-4\int_\Omega \int_{\pa \Omega}  |u|^2(t,y) \psi(x-y)  (x-y)_k \Re (\pa_n \bar u u_k)(t,x)  dS(x)dy.  \label{IM-3-3}
  \end{align}

  \begin{align}
 \eqref{IM-4} ~=~ & -4 \int\int_{\Omega\times \Omega  }  \pa_{y_j} \Im (\bar u u_j)(t,y)  \psi(x-y)(x-y)_k \Im(\bar u u_k)(t,x) dxdy \nonumber \\
  =  & -4 \iint_{\Omega\times \Omega}  \phi(x-y)  \Im (\bar u \nabla u)(t,y)   \Im (\bar u \nabla u)(t,x) dxdy  \label{IM-4-1}\\
   & -4  \iint_{\Omega\times \Omega}  \Im (\bar u \nabla u_j)(t,y)  P_{jk}(x-y) [\psi(x-y)-\phi(x-y)] \Im (\bar u \nabla u_k)(t,x)      dxdy, \label{IM-4-2}
  \end{align}
where $P_{ij}(x)=\delta_{ij}-\frac{x_ix_j}{|x|^2}.$

 From  the fact that $\psi-\phi\geq 0$ and Cauchy-Schwarz, we have
  \begin{align}
      & \eqref{IM-3-2}+ \eqref{IM-4-2} \nonumber \\
    = & 4\iint_{\Omega\times\Omega} |u|^2(t,y) |\not\nabla_y u(t,x) |^2  [(\psi-\phi)(x-y)] dxdy \nonumber \\
    & -4\iint_{\Omega\times\Omega}
    \Im [\bar u\not\nabla_xu ](t,y)
    \Im [\bar u\not\nabla_yu ](t,x)  [(\psi-\phi)(x-y)] dxdy \geq 0,
   \end{align}
  where $\not \nabla_z$ is the angular derivation centered at $z\in\R^3$.
By the compactness and convexity of $\pa \Omega$, we have
 \begin{align}
    & |\eqref{IM-2-2}+ \eqref{IM-3-3}|  \nonumber\\
   =~&  \left|2  \int_\Omega \int_{\pa \Omega} |u|^2(t,y)\psi(x-y)  (x-y)  n(x) |\pa_nu|^2(t,x) dxdy\right|\\
   \les~ &   R \int_\Omega \int_{\pa \Omega} |u|^2(t,y)   ~|\pa_nu|^2(t,x) dxdy. \nonumber
\end{align}
By a direct computation, one has
  \begin{align*}
   & \frac{\omega_3R^3}{4}[\eqref{IM-3-1}+ \eqref{IM-4-1}] \\
    = &\int_{\R^3}\iint_{\Omega\times \Omega }
    \chi^2\big(\tfrac{x-s}{R}\big) \chi^2\big(\tfrac{y-s}{R}\big) \left[ |u|^2(t,y)|\nabla u|^2(t,x) -\Im (\bar u \nabla u)(t,y)\Im (\bar u \nabla u)(t,x)  \right] dxdyds\\
   = & \int_{\R^3}\iint_{\Omega\times \Omega }
    \chi^2\big(\tfrac{x-s}{R}\big) \chi^2\big(\tfrac{y-s}{R}\big)  |u|^2(t,y)|\nabla u^\xi|^2(t,x)   dxdyds,
\end{align*}
  for $u^\xi(t,x)=e^{ix\xi}u(t,x)$ and $$\xi(t,s,R)= -\frac{ \int_\Omega \chi^2\big(\tfrac{x-s}{R}\big) \Im (\bar u \nabla u)(t,x) dx }{ \int_\Omega \chi^2\big(\tfrac{x-s}{R}\big) |u|^2(t,x) dx} $$ or $\xi=0$ if $\int_\Omega \chi^2\big(\tfrac{x-s}{R}\big) |u|^2(t,x) dx=0$ .

Combining these estimates above, we have

\begin{align*}
& \frac{1}{R^3}\int_{\R^3}\iint_{\Omega\times \Omega }
       |\chi(\tfrac{\cdot-s}{R}) u|^2(t,y) \Big[|\nabla\big(  \chi(\tfrac{\cdot-s}{R}) u^\xi\big)|^2(t,x) -\frac{3}{4} |\chi(\tfrac{\cdot-s}{R}) u|^4 (t,x)\Big] dxdyds\\
\les & \frac{1}{\eta^2 R^2} + \pa_t M_R(t) +  \int_\Omega \int_{\pa \Omega} |u|^2(t,y)  |x \cdot n(x)| ~|\pa_nu|^2(t,x) dxdy \\
    & +\iint_{\Omega \times \Omega} |u|^2(t,y) |u|^4[ (\psi-\phi)(x-y) +(\phi-\phi_1)(x-y)] dxdy \\
    & + \iint_{\Omega \times \Omega} |u|^2(t,y) |u\nabla u|(t,x) \left| \nabla (\psi +\phi)(x-y) \right| dxdy.
\end{align*}
By the Lemma \ref{coer-2} and \eqref{est-phipsi},  for sufficiently large $R>0$,
we have
\begin{align*}
 & \frac{1}{JT_0}\int_{R_0}^{R_0e^J} \int_I  \frac{1}{R^3}\int_{\R^3}\iint_{\Omega\times \Omega }
       |\chi(\tfrac{\cdot-s}{R}) u|^2(t,y) |\nabla( \chi\big(\tfrac{\cdot-s}{R}\big) u^\xi)|^2(t,x) dxdyds dt \frac{dR}{R} \\
       \les & \frac{1}{ J \eta^2 R_0^2} + \frac{R_0 e^J}{(\log T_0)^\frac12 J } + \frac{1}{R_0 \eta J}+ \frac{1}{J\eta} +\eta ,  
\end{align*}
which implies  the conclusion  \eqref{inter-morawetz}
by taking $\eta=J^{-\frac12}=R_0^{-1} =\eps$ and  $ \log T_0=e^{\eps^{-2}}$.


\end{proof}

\subsection{Proof of Theorem \ref{main-thm}}

  By the interaction Morawetz estimates and the Sobolev embedding,
there exists $T_0>0$ and $R\in[R_0,e^J R_0]$ such that for any interval $I=[a,a+T_0]$
\beqq
\frac{1}{T_0} \int_{I} \frac{1}{R^3} \int_{\R^3} \Big\|\chi\big(\tfrac{\cdot-s }{R}\big) u(t)\Big\|_{L^2(\Omega)}^2  \Big\|\nabla \Big(\chi\big(\tfrac{\cdot-s }{R}\big) u^\xi(t) \Big)\Big\|_{L^2(\Omega)}^2
dsdt\les \eps. 
 \eeqq
Thus there exists $\theta\in[0,1]^3$ such that
\beqq
\frac{1}{T_0} \int_{I}\sum_{z\in\mathbb{Z}^3} \Big\|\chi\big(\tfrac{\cdot-\frac{R}{4}(z+\theta) }{R} \big) u(t)\Big\|_{L^2(\Omega)}^2  \Big\|\nabla \Big(\chi\big(\tfrac{\cdot-\frac{R}{4}(z+\theta) }{R}\big) u^\xi(t) \Big)\Big\|_{L^2(\Omega)}^2 dt \les \eps.
\eeqq
Therefore, there exists a subinterval $I_0=[b-\eps^{-\frac14},b]\subset I$ such that
\beqq
\int_{I_0} \sum_{z\in \mathbb{Z}^3} \Big\|\chi\big(\tfrac{\cdot-\frac{R}{4}(z+\theta) }{R}\big) u(t)\Big\|_{L^2(\Omega)}^2  \norm{\nabla \Big(\chi\big(\tfrac{\cdot-\frac{R}{4}(z+\theta) }{R} \big) u^\xi(t) \Big)}_{L^2(\Omega)}^2 dt \les \eps^\frac34.
\eeqq
This together with  the Gagliardo-Nirenberg  inequality
$$\|f\|_{L^3}^4\lesssim\|f\|_{L^2}^2\|\nabla f\|_{L^2}^2$$
 implies that
\beq\label{L34}
\int_{I_0} \sum_{z\in \mathbb{Z}^3} \Big\|\chi\big(\tfrac{\cdot-\frac{R}{4}(z+\theta) }{R}\big) u(t)\Big\|_{L^3(\Omega)}^4  dt \les  \eps^\frac34.
\eeq
On the other hand, by H\"older's inequality and Sobolev embedding, we have
\beqq
\sum_{z\in \mathbb{Z}^3} \Big\|\chi\big(\tfrac{\cdot-\frac{R}{4}(z+\theta) }{R}\big) u(t)\Big\|_{L^2(\Omega)} \Big\|\chi\big(\tfrac{\cdot-\frac{R}{4}(z+\theta) }{R}\big) u(t)\Big\|_{L^6(\Omega)} 
\les 1,
\eeqq
which yields
\beq\label{L32}
\sum_{z\in \mathbb{Z}^3} \Big\|\chi\big(\tfrac{\cdot-\frac{R}{4}(z+\theta) }{R} \big) u(t)\Big\|_{L^3(\Omega)}^2 \les 1 ,
\eeq
Now, we have, by  \eqref{L34} and\eqref{L32},
\beq
\begin{split}
&\norm{u}^3_{L^3(I_0\times\Omega)}\\
\leq~& \int_{I_0} \sum_{z\in \mathbb{Z}^3} \norm{\chi(\frac{\cdot-\frac{R}{4}(z+\theta) }{R}) u(t)}_{L^3(\Omega)}^3  dt \\
\leq ~&   \int_{I_0}  \left(\sum_{z\in \mathbb{Z}^3} \norm{\chi(\frac{\cdot-\frac{R}{4}(z+\theta) }{R}) u(t)}_{L^3(\Omega)}^4 \right)^{\frac12}   \left(\sum_{z\in \mathbb{Z}^3} \norm{\chi(\frac{\cdot-\frac{R}{4}(z+\theta) }{R}) u(t)}_{L^3(\Omega)}^2 \right)^{\frac12} dt                               \\
\leq ~&  \left(\int_{I_0} \sum_{z\in \mathbb{Z}^3} \norm{\chi(\frac{\cdot-\frac{R}{4}(z+\theta) }{R}) u(t)}_{L^3(\Omega)}^4 dt\right)^{\frac12}
  \left(\int_{I_0} \sum_{z\in \mathbb{Z}^3}  \norm{\chi(\frac{\cdot-\frac{R}{4}(z+\theta) }{R}) u(t)}_{L^3(\Omega)}^2 dt\right)^{\frac12}                               \\
 \leq & \eps^{\frac14}.
\end{split}
\eeq
By interpolation, we have
\begin{align}
\norm{u}_{L^5_{t,x}(I_0\times\Omega)}
 \leq~ \norm{u}_{L^3_{t,x}(I_0\times\Omega)}^\frac37
\norm{u}_{L^{10}_{t,x}(I_0\times\Omega)}^\frac47
 \les~  \eps^{\frac{1}{28}-\frac14\frac1{10} \frac{4}{7}}
  \les~   \eps^{\frac{3}{140}},
\end{align}
where we have used the fact that
\beqq
\norm{u}_{L^{10}_{t,x}(I\times\Omega)} \les \langle |I| \rangle^\frac{1}{10},
\eeqq
which is a direct consequence of the Strichartz estimates, the Sobolev inequality  and a standard continuity argument.
Then, by the scattering criterion in  Proposition \ref{sct-cri},
the conclusion follows.

\vskip 0.2in

%



 \noindent\textbf{Acknowledgements}.
The authors would like to thank Jason Murphy for his helpful discussions. J. Zheng  was partly supported by NSFC Grants 11771041, 11831004. This work is financially supported by National Natural Science Foundation of China (NSAF - U1530401).

\end{document}